\newtheorem{definition}{Definition}[section]
\newtheorem{proposition}[definition]{Proposition}
\newtheorem{cor}[definition]{Corollary}
\newtheorem{lemma}[definition]{Lemma}
\newtheorem{theorem}[definition]{Theorem}
\newtheorem*{theorem*}{Theorem}
\newtheorem*{definition*}{Definition}
\newtheorem{remark}[definition]{Remark}
\theoremstyle{remark}
\numberwithin{equation}{section}
\title{The Diagonal of (3,3) fivefolds}
\author[Jan Lange]{Jan Lange}
\address{Institute of Algebraic Geometry \\ Leibniz University Hannover \\ Welfengarten 1 \\ 30167 Hannover, Germany}
\email{\href{mailto:lange@math.uni-hannover.de}{lange@math.uni-hannover.de}}
\author[Bj{\o}rn Skauli]{Bj{\o}rn Skauli}
\address{Department of Mathematics \\ University of Oslo \\ Moltke Moes vei 35 0851 Oslo, Norway}
\email{\href{mailto:bjorska@math.uio.no}{bjorska@math.uio.no}}
\DeclareMathOperator{\blowup}{Bl}
\DeclareMathOperator{\CH}{CH}
\DeclareMathOperator{\charac}{char}
\DeclareMathOperator{\divisor}{div}
\DeclareMathOperator{\Ima}{Im}
\DeclareMathOperator{\Ker}{Ker}
\DeclareMathOperator{\Sing}{Sing}
\DeclareMathOperator{\specialization}{sp}
\DeclareMathOperator{\Spec}{Spec}
\DeclareMathOperator{\trdeg}{tr.deg}
\newcommand{\Z}{\mathbb{Z}}
\newcommand{\C}{\mathbb{C}}
\newcommand{\projspace}{\mathbb{P}}
\newcommand{\restr}[1]{\left. #1 \right|} 
\newcommand{\vof}[1]{\left| #1 \right|}
\newcommand{\innerpro}[2]{\left\langle #1 , #2 \right\rangle}
\begin{document}

\begin{abstract}
        We show that a very general (3,3) complete intersection in $\projspace^7$ over an uncountable algebraically closed field of characteristic different from $2$ admits no decomposition of the diagonal, in particular it is not retract rational. This strengthens Nicaise and Ottem's result in \cite{NO22} where stable irrationality in characteristic 0 was shown. The main tool is a Chow-theoretic obstruction which was found by Pavic and Schreieder in \cite{PS23}, where quartic fivefolds are studied.
\end{abstract}

\date{February 28, 2024}
\subjclass[2020]{Primary 14M10, 14C25; Secondary 14E08}
\keywords{Complete intersections, rational, retract rational}

\maketitle

\section{Introduction}
The Lüroth problem asks whether rationality and unirationality are equivalent. This holds for curves and complex surfaces, but in higher dimensions the two notions start to differ. Some interesting intermediate properties between rational and unirational have therefore been introduced, but the relations between these are not fully understood yet.

Recall that a variety $X$ is \emph{rational} if it is birational to projective space, and \emph{stably rational} if $X \times \mathbb{P}^m$ is rational for some $m$. We say that $X$ is \emph{retract rational} if the identity map of $X$ factors through some projective space as a rational map, i.e. there exists rational maps $f$ and $g$ such that the composition
$$
    X \overset{f}{\dashrightarrow} \projspace^n \overset{g}{\dashrightarrow} X
$$
is defined and equal to the identity on a nonempty open set $U \subset X$. Finally, a variety $X$ is called \emph{unirational} if there is a dominant map $\projspace^n \dashrightarrow X$. There are straightforward implications
$$
    \text{rational} \Longrightarrow \text{stably rational} \Longrightarrow \text{retract rational} \Longrightarrow \text{unirational}.
$$
Over algebraically closed fields only the first and third implications are known to be strict. Beauville, Colliot-Thélène, Sansuc, and Swinnerton-Dyer \cite{BCTSSD85} showed the strictness of the first implication over $\C$. The first counterexample over $\C$ to the third implication was constructed by Artin and Mumford in \cite{AM72}. Over non-closed fields, there are algebraic tori which are retract rational, but not stably rational. An overview of these can be found in \cite{HY17}. But the problem remains open for algebraically closed fields.

Voisin \cite{Voi15} introduced a cycle-theoretic specialization technique to prove retract irrationality based on the decomposition of the diagonal, (see \Cref{sec:Decomposition}) and applied it to the very general quartic double solid. This specialization technique was later generalized and refined by Colliot-Thélène and Pirutka in \cite{CTP16} and by Schreieder in \cite{Sch19b}. Totaro (\cite{Tot16}) used this technique to improve Kollár's result (\cite{Kol95}) on the irrationality of very general hypersurfaces of degree greater than roughly $\frac{2}{3}$ of the dimension to stable irrationality. Later on, Schreieder achieved a logarithmic bound for retract irrationality in \cite{Sch19b}. Beyond hypersurfaces, rationality of complete intersections has also been studied using the same technique by e.g. Chatzistamatiou and Levine in \cite{ChatzistamatiouLevine} and Hassett, Pirutka and Tschinkel in \cite{HPT18b}.
The latter three authors also used the technique to study how rationality can vary in families \cite{HPT18}.

In characteristic 0, a different approach was introduced by Nicaise and Shinder \cite{NS19}, as well as Kontsevich and Tschinkel \cite{KT19}. This method is based on motivic integration and the weak factorization theorem and provides conditions under which stable rationality is preserved under specialization. Using this, Nicaise and Ottem \cite{NO22} proved stable irrationality of quartic fivefolds and complete intersections of two cubics in $\projspace^7$ and also used the results from \cite{Sch19b} to prove stable irrationality of many other complete intersections.

With this technique, the strongest results typically arise by specializing into a union of several components such that some of the components intersect in a lower dimensional variety, and this intersection is known to be stable irrational by some other method. Since this method does not a priori obstruct retract rationality, it is an interesting question if it can be used to find retract rational but stably irrational varieties over algebraically closed fields.

In \cite{PS23}, Pavic and Schreieder introduce a Chow-theoretic analogue of the motivic method, where retract rationality can be obstructed by degenerating to a union where the obstruction to a decomposition of the diagonal, and hence to rationality, lies in the intersection of the components. Using this technique, they study the very general quartic fivefold and show that it is also retract irrational. Despite the analogy, the relation between the method in \cite{PS23} and the one used in \cite{NO22} is unclear, and results obtained by one might not necessarily translate to the other. Hence it is worthwhile to study to what extent Pavic and Schreieder's method applies to the new examples of stably irrational varieties found in \cite{NO22}.

In this paper, we study the (3,3) fivefold example from \cite{NO22}, i.e. a complete intersection of two cubics in $\projspace^7$, and apply the method of \cite{PS23} to show that it is also retract irrational. More precisely, we show the following result.

\begin{theorem}\label{thm:MainResultIntro}
    Let $k$ be an uncountable algebraically closed field of characteristic different from $2$. Then the very general (3,3) fivefold over $k$ admits no decomposition of the diagonal, in particular it is not retract rational.
\end{theorem}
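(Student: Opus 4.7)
The strategy is to apply the Chow-theoretic degeneration method of Pavic and Schreieder \cite{PS23}. Over a discrete valuation ring $R$ with residue field $k$ and fraction field $K$, I would construct a flat projective family $\mathcal{X} \to \Spec R$ whose generic fiber is a smooth general $(3,3)$ fivefold and whose special fiber $X_0$ is a reducible singular $(3,3)$ fivefold. A natural first choice is to specialize one defining cubic to $F_1 = \ell \cdot Q$ with $\ell$ a linear form and $Q$ a quadric, so that $X_0 = V(\ell, F_2) \cup V(Q, F_2)$ is the union of a cubic fivefold in $\projspace^6$ and a $(2,3)$ complete intersection in $\projspace^7$, meeting along the $(2,3)$ complete intersection fourfold $Z := V(\ell, Q, F_2) \subset \projspace^6$. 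After blowing up the singular locus of $\mathcal{X}$, one obtains a semi-stable model whose special fiber is a strict normal crossings divisor; the new components typically appear as projective bundles over subvarieties of $Z$.

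Next, I would check that each component of the resulting special fiber (both the original ones and the exceptional divisors introduced in the semi-stable reduction) admits a universally $\CH_0$-trivial resolution, which should follow from their rational connectedness together with control of the introduced singularities. The heart of the proof is then to exhibit a nontrivial unramified cohomology class, say in $H^*_{\mathrm{nr}}(k(Z), \Z/2)$, that serves as a Chow-theoretic obstruction in the Pavic-Schreieder framework. Following Hassett-Pirutka-Tschinkel \cite{HPT18b}, a natural source is a quadric-bundle structure on $Z$ obtained by projection from a suitable linear subspace, whose discriminant locus supports a nontrivial étale double cover and thus a Brauer class of Artin-Mumford type. A further mild specialization of $F_2$ and $Q$ would be arranged to place $Z$ precisely in the range of such a construction.

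Having produced this obstruction on $X_0$, I would invoke the specialization theorem of \cite{PS23} to conclude that $\mathcal{X}_K$ admits no decomposition of the diagonal, and then transfer this conclusion to a very general $(3,3)$ fivefold over $k$ by the standard argument using countability of the relevant Hilbert scheme strata and the hypothesis that $k$ is uncountable. Non-retract-rationality is then immediate from the implication that retract rationality forces a decomposition of the diagonal (see \Cref{sec:Decomposition} in the sequel).

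The principal obstacle is the construction of a nontrivial unramified class on a suitably specialized $Z$ that survives the pushforward into the Pavic-Schreieder obstruction space for $X_0$, while simultaneously verifying the universal $\CH_0$-triviality of every component and exceptional divisor arising in the semi-stable reduction. Controlling the combinatorics of the degeneration, and carrying out an explicit Brauer-group computation for the quadric-bundle discriminant in the spirit of \cite{HPT18b} and \cite{PS23}, will form the technical core of the argument.
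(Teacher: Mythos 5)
Your overall skeleton --- degenerate one cubic so that the special fibre becomes (cubic fivefold in a hyperplane) $\cup$ ((2,3) fivefold), pass to a semi-stable model, and feed a Hassett--Pirutka--Tschinkel-type unramified class into the Pavic--Schreieder obstruction --- matches the paper. But two essential ideas are missing. First, the obstruction class lives on the \emph{intersection} $W$ of the two components, which is a fourfold, whereas the target of the Pavic--Schreieder map $\Phi_{\mathcal{X}}$ is $\bigoplus_i \CH_0(Y_i)$ taken over the \emph{top-dimensional} components; a class supported on $W$ is therefore invisible to $\Phi$ as you have set things up. Resolving the singular locus of the total space does not fix this: that locus is a threefold $S$, and blowing up the component $Y$ merely makes the model strictly semi-stable without adding any new component. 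The paper's key move is a further $2{:}1$ base change followed by blowing up the component $Z$, which inserts a new fivefold component $P_W$, a $\projspace^1$-bundle over $W$, into the special fibre; only then can the nontriviality of $\CH_0$ of $W$ obstruct the surjectivity of $\Phi$.

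Second, your plan to verify that each component admits a universally $\CH_0$-trivial resolution points in the wrong direction: the component $P_W$ must \emph{fail} to be universally $\CH_0$-trivial --- that failure is precisely the obstruction. What is actually needed is control of $\CH_1$ of the remaining components $Y$ and $Z$ (and of the exceptional data) after base change to the function field $k(P_W)$, so that the image of $\Phi$ mod $2$ can be shown to land in the image of $\CH_0(P_{W_0})/2 \to \CH_0(P_{W_0}\times k_0(P_{W_0}))/2$. The paper achieves this by building three transcendental parameters into the defining equations and performing three successive specializations (via Fulton's specialization map) that express $\CH_1(Z)$ through $\CH_1(Y)$ plus a $\CH_0$, make $Y$ rational, and render the auxiliary loci universally $\CH_0$-trivial. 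Without a substitute for this step your argument cannot exclude that the missing diagonal class is hit by a $1$-cycle on $Y$ or $Z$ defined only over $k(P_W)$. Finally, since $k$ may have positive characteristic, the Merkurjev-pairing computation on the singular $W_0$ requires an odd-degree alteration in the sense of Gabber rather than a resolution of singularities; this is the reason the hypothesis is only $\charac k \neq 2$.
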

Over fields of positive characteristic, the rationality of the very general (3,3) fivefold was previously open. Additionally, we present an explicit example of a retract irrational (3,3) fivefold.

The paper is organized as follows: In \Cref{sec:Preliminaries} we recall the Chow-theoretic technique of Pavic and Schreieder as well as some basic definitions of e.g. decomposition of the diagonal and specializations. The main result is then proven in \Cref{sec:Main}, which is split up in four parts: First we follow Nicaise and Ottem \cite[Theorem 7.2]{NO22} and degenerate the complete intersection to a union of two components with a carefully chosen intersection. We then follow the methods in \cite{PS23} to obtain a specialization of our complete intersection to a union of three components. The components are then simplified using further specializations, letting us apply the obstruction found in \cite{PS23}. In the third part we combine this with the fact that one of the three components is chosen to be stably birational to the quadric surface bundle described by Hassett, Pirutka and Tschinkel in \cite{HPT18} to obstruct the existence of a decomposition of the diagonal. Finally, we show that \Cref{thm:MainResultIntro} follows from this.

\subsection*{Acknowledgement}
    The authors are grateful to John Ottem, Nebojsa Pavic and Stefan Schreieder for suggesting this collaboration and also for their patience in answering questions. The authors would also like to thank the referee for their careful reading of the paper, and for their helpful comments. The first named author received funding from the European Research Council (ERC) under the European Union's Horizon 2020 research and innovations programe under grant agreement No. 948066 (ERC - StG RationAlgic).

\section{Preliminaries}
\label{sec:Preliminaries}

\subsection{Conventions and Notations}
    An \emph{algebraic $k$-scheme} is a separated scheme of finite type over a field $k$. A \emph{$k$-variety} (or \emph{variety}) is an integral, algebraic $k$-scheme. Let $X$ be a $k$-variety. We denote the \emph{function field} of $X$ by $k(X)$ and the \emph{residue field} of a closed point $x \in X$ by $\kappa(x)$. For a separated scheme $X$ over a ring $R$ and some ring extension $A/R$ we write \emph{$X_A := X \times A := X \times_R A := X \times_{\Spec R} \Spec A$} for the base change.
    
    We denote the \emph{Chow group of $l$-cycles} of a $k$-variety $X$ by $\CH_l(X)$, which is the quotient of the free abelian group generated by $l$-dimensional subvarieties modulo rational equivalence.
    
    A \emph{very general point} of an irreducible separated scheme is a closed point in the complement of a countable union of proper closed subsets.
    
    We will write \emph{$(d_1,\dots,d_k)$ $n$-fold} or \emph{$(d_1,\dots,d_k)$ complete intersection} for the intersection of $k$ hypersurfaces in $\projspace^{n+k}$ of degree $d_1,\dots,d_k$.

\subsection{Decomposition of the diagonal}
    \label{sec:Decomposition}
    We briefly introduce the notion of (Chow-theoretic) decomposition of the diagonal and its relation to rationality questions.
    
    Let $X$ be a $k$-variety of dimension $n$ and let $\Delta_X \subset X \times_k X$ be the diagonal. We say that $X$ admits a  (Chow-theoretic) decomposition of the diagonal if there exists a zero-cycle $z$ on $X$ and an $n$-cycle $Z_X \subset X \times_k X$, which does not dominate the first factor, such that
    $$
        [\Delta_X] = [X \times_k z] + [Z_X] \in \CH_n(X \times_k X).
    $$
    Here $[\cdot]$ denotes the class of the cycle in the Chow group. Pulling back the diagonal $\Delta_X$ via the natural morphism $X_{k(X)} \to X \times_k X$ yields a zero-cycle $\delta_X \in \CH_0(X_{k(X)})$. Then $X$ admits a decomposition of the diagonal if and only if there is an equality
    $$
        \delta_X = [z_{k(X)}] \in \CH_0(X_{k(X)}),
    $$
    for some zero-cycle $z$ on $X$, see e.g. \cite[Lemma 7.3]{Sch21}.
    
    Our interest in a decomposition of the diagonal comes from the following lemma, which is not hard to prove, see e.g. \cite[Lemma 7.5]{Sch21}.
    \begin{lemma}
        A retract rational $k$-variety admits a decomposition of the diagonal.
    \end{lemma}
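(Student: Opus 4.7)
\emph{Proof strategy.} The plan is to verify the alternative criterion displayed just above: namely, to exhibit a zero-cycle $z$ on $X$ such that $\delta_X = [z_{k(X)}]$ in $\CH_0(X_{k(X)})$. Write $K := k(X)$, and let $\eta\colon \Spec K \to X_K$ denote the tautological $K$-point arising from the generic point of $X$, whose class is precisely $\delta_X$. By hypothesis, there exist rational maps $f \colon X \dashrightarrow \projspace^N$ and $g \colon \projspace^N \dashrightarrow X$ with $g \circ f = \mathrm{id}_X$ on a dense open $U \subseteq X$.

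The argument I would run proceeds in three steps. First, since $f$ is defined at the generic point of $X$, base changing to $K$ produces a $K$-rational point $P := f_K(\eta) \in \projspace^N_K$; the defining identity of retract rationality then forces $g_K$ to be defined at $P$ with $g_K(P) = \eta$. Second, I would invoke the standard fact that $\CH_0(\projspace^N_K) \cong \Z$ via the degree map, so $[P] = [Q_K]$ for any $K$-point $Q_K$ obtained as the base change of a $k$-rational point $Q \in \projspace^N(k)$; moreover, $Q$ can be chosen in the domain of definition $V \subseteq \projspace^N$ of $g$, since $V$ is dense open in $\projspace^N$. Third, I would transport this equality from $\projspace^N_K$ to $X_K$ through $g$, viewed as a correspondence. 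After replacing $X$ by a projective compactification (harmless, since admitting a decomposition of the diagonal is a birational property), let $\Gamma \subseteq \projspace^N \times X$ be the closure of the graph of $g$, with projections $p\colon \Gamma \to \projspace^N$ (birational) and $q\colon \Gamma \to X$ (proper). The induced correspondence action
\[
\Gamma_* \colon \CH_0(\projspace^N_K) \to \CH_0(X_K), \qquad \alpha \longmapsto (q_K)_*(p_K)^*\alpha,
\]
restricts, on classes of $K$-points lying in $V_K$, to the honest pushforward by $g_K$, since $p_K$ is an isomorphism over $V_K$. Applying $\Gamma_*$ to the equality $[P] = [Q_K]$ therefore gives
\[
\delta_X = [\eta] = [g_K(P)] = \Gamma_*[P] = \Gamma_*[Q_K] = [g(Q)_K],
\]
so $z := g(Q) \in X(k)$ is the sought zero-cycle.

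The only genuinely subtle point is step three: defining the correspondence action for a merely rational $g$ and confirming its compatibility with the base change from $k$ to $K$. This ultimately reduces to the observation that $\Gamma \to \projspace^N$ is an isomorphism over $V$, so on cycles supported in $V_K$ the operation $\Gamma_*$ is literally pushforward along the morphism $g_K|_{V_K}$; no refined intersection theory is needed beyond that. Everything else is formal, combining the definition of $\delta_X$ as the class of the generic point with the triviality of $\CH_0$ of projective space.
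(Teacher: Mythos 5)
The paper does not actually prove this lemma; it cites \cite[Lemma 7.5]{Sch21}, and your argument is precisely the standard proof of that result (push $\delta_X$ into $\projspace^N_K$ via $f$, use $\CH_0(\projspace^N_K)\cong\Z$, and return via the graph-closure correspondence of $g$). So the architecture is right. There is, however, a genuine gap in how you dispose of what you correctly identify as the only subtle point. You claim that well-definedness of $\Gamma_*$ ``reduces to the observation that $\Gamma\to\projspace^N$ is an isomorphism over $V$, so on cycles supported in $V_K$ the operation is literally pushforward along $g_K|_{V_K}$; no refined intersection theory is needed.'' That is not sufficient: although $[P]$ and $[Q_K]$ are both supported in $V_K$, the rational equivalence between them lives on all of $\projspace^N_K$ and need not be supported over $V_K$, and pushforward along the non-proper morphism $g_K|_{V_K}$ does not descend to Chow groups (already $\CH_0(\mathbb{A}^1)=0$ while $\CH_0(\projspace^1)=\Z$, so ``pushforward'' along $\mathbb{A}^1\hookrightarrow\projspace^1$ is not well defined on classes). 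What rescues the step is exactly the refined machinery you set aside: $p\colon\Gamma\to\projspace^N$ is a \emph{proper} morphism to a \emph{smooth} variety, so Fulton's Gysin pullback $p^{!}\colon\CH_0(\projspace^N_K)\to\CH_0(\Gamma_K)$ exists, respects rational equivalence, and agrees with the set-theoretic inverse image over $V_K$ where $p$ is an isomorphism; then $\Gamma_*=(q_K)_*\circ p^{!}$ with $q$ proper, and your chain of equalities goes through. Note that only $\projspace^N$ needs to be smooth here, so the argument covers singular $X$ as the lemma requires.

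Two smaller points. The reduction to proper $X$ is better justified by restricting the resulting rational equivalence from $\bar X_{k(X)}$ to the dense open $X_{k(X)}$ (flat pullback along the open immersion sends $\delta_{\bar X}$ to $\delta_X$ and $[z_{k(X)}]$ to the class of $z\cap X$), rather than by asserting that admitting a decomposition of the diagonal is a birational property of possibly singular varieties, which is not obvious. And over a finite base field a dense open $V\subset\projspace^N$ may contain no $k$-rational point, so $Q$ should in general be taken to be a degree-one zero-cycle supported in $V$; this is immaterial in the setting of the paper, where $k$ is infinite.
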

    
    We say that a proper $k$-variety $X$ has \emph{universally trivial Chow group of zero-cycles} (short: universally trivial $\CH_0$) or \emph{$X$ is universally $\CH_0$-trivial} if for any field extension $F/k$, the degree map
    $$
        \deg \colon \CH_0(X_F) \longrightarrow \Z
    $$
    is an isomorphism. From this definition it is obvious that varieties with universally trivial $\CH_0$ admit a decomposition of the diagonal. The converse holds for geometrically integral and smooth $k$-varieties, see \cite[Proposition 1.4]{CTP16}.

\subsection{Chow-theoretic obstruction to retract rationality}

    In this section we recall the constructions of the obstruction map from \cite[Section 3]{PS23}. Throughout this section $R$ denotes a discrete valuation ring with residue field $k$ and fraction field $K$. A proper flat $R$-scheme $\mathcal{X}$ is called \emph{strictly semi-stable} if the special fibre $X_k = \mathcal{X} \times_R k$ is a geometrically reduced simple normal crossing divisor on $\mathcal{X}$. In other words, the components $Y_i$ ($i \in \{1,\dots,m\}$) of $X_k$ are smooth Cartier divisors in $\mathcal{X}$ and the scheme-theoretic intersection $\bigcap\limits_{j \in J} Y_j$ is smooth of codimension $\vof{J}$ or empty for every $J \subset \{1,\dots,m\}$.
    
    \begin{definition}[{\cite[Definition 3.1]{PS23}}]\label{def:ObstructionMap}
        Let $\iota \colon X_k \hookrightarrow \mathcal{X}$ and $\iota_i \colon Y_i \hookrightarrow \mathcal{X}$ denote the natural embeddings. For every $i \in \{1,\dots,m\}$ define
        $$
            \Phi_{\mathcal{X},Y_i} \colon \CH_1(X_k) \overset{\iota_\ast}{\longrightarrow} \CH_1(\mathcal{X}) \overset{\iota_i^\ast}{\longrightarrow} \CH_0(Y_i)
        $$
        to be the composition of the push-forward along the embedding $\iota$ and the intersection with the Cartier divisor $Y_i \subset \mathcal{X}$. We denote the direct sum by
        \begin{equation}\label{eq:ObstructionMap}
            \Phi_{\mathcal{X}} := \sum\limits_{i = 1}^m \Phi_{\mathcal{X},Y_i} \colon \CH_1(X_k) \longrightarrow \bigoplus\limits_{i=1}^m \CH_0(Y_i).
        \end{equation}
    \end{definition}
    
    Although the involved Chow groups depend only on the special fibre $X_k$, the obstruction map might a priori depend on the choice of the strictly semi-stable family. We recall the explicit description of $\Phi$ in \cite[Lemma 3.2]{PS23}, which shows that $\Phi_\mathcal{X}$ in fact depends only on the special fibre $X_k$, and not on the total space $\mathcal{X}$.
    
    \begin{lemma}
        With the same notation as in \Cref{def:ObstructionMap}, let furthermore $Y_{i,j}:= Y_i \cap Y_j$ be the scheme-theoretic intersection of two components $Y_i$ and $Y_j$ of the special fibre $X_k$ and denote by $\iota_{i,j} \colon Y_{i,j} \to Y_j$ and $\iota_i \colon Y_i \to X_k$ the natural inclusions. Moreover, we write $\restr{\gamma_i}_{Y_{j,i}} := \iota_{j,i}^\ast \gamma_i$ for the restriction of $\gamma_i \in \CH_1(Y_i)$ to the intersection $Y_{i,j}$. Then for any $\gamma_j \in \CH_1(Y_j)$:
        \begin{equation*}
            \Phi_{\mathcal{X},Y_i}\left(\left(\iota_j \right)_\ast \gamma_j\right) = \begin{cases}
                \left(\iota_{j,i}\right)_\ast \left(\restr{\gamma_j}_{Y_{i,j}}\right) & \text{for } j \neq i, \\
                - \sum\limits_{k \neq j} \left(\iota_{k,i}\right)_\ast \left(\restr{\gamma_j}_{Y_{k,j}}\right) & \text{for } j = i.
            \end{cases}
        \end{equation*}
        In particular, for $\gamma = \sum\limits_{k=1}^m \left(\iota_k\right)_\ast \gamma_k \in \CH_1(X_k)$ and $i \in \{1,\dots,m\}$:
        \begin{equation}\label{eq:ExplicitDescriptionOfObstructionMap}
            \Phi_{\mathcal{X},Y_i}(\gamma) = \sum\limits_{j \neq i} (\iota_{j,i})_\ast \restr{\gamma_j}_{Y_{i,j}} - \sum\limits_{j \neq i} (\iota_{j,i})_\ast \restr{\gamma_i}_{Y_{j,i}}.
        \end{equation}
    \end{lemma}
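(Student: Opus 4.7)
The plan is to unwind the definition $\Phi_{\mathcal{X},Y_i} = \iota_i^\ast \circ \iota_\ast$ and reduce the computation to two standard inputs from intersection theory: compatibility of the refined Gysin pullback with proper pushforward in a Cartesian square, and the self-intersection formula. Abbreviate $j_k := \iota \circ \iota_k \colon Y_k \hookrightarrow \mathcal{X}$, so that $\Phi_{\mathcal{X},Y_i}\bigl((\iota_j)_\ast \gamma_j\bigr) = \iota_i^\ast (j_j)_\ast \gamma_j$, where $\iota_i^\ast$ is the Gysin pullback along the Cartier divisor $Y_i \subset \mathcal{X}$.

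For the case $j \neq i$, I would exhibit the Cartesian square
\[
\begin{tikzcd}
Y_{i,j} \ar[r, "\iota_{i,j}"] \ar[d, "\iota_{j,i}"'] & Y_j \ar[d, "j_j"] \\
Y_i \ar[r] & \mathcal{X},
\end{tikzcd}
\]
noting that by strict semi-stability all four arrows are regular embeddings of codimension one. The base-change identity $\iota_i^\ast (j_j)_\ast = (\iota_{j,i})_\ast \iota_{i,j}^\ast$ then produces $\Phi_{\mathcal{X},Y_i}\bigl((\iota_j)_\ast\gamma_j\bigr) = (\iota_{j,i})_\ast\bigl(\gamma_j|_{Y_{i,j}}\bigr)$ directly.

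The case $j = i$ is the geometric heart of the argument, and the step I expect to require the most care. The self-intersection formula yields $\iota_i^\ast (j_i)_\ast \gamma_i = c_1(N_{Y_i/\mathcal{X}}) \cap \gamma_i$, so the point is to identify the normal bundle. Flatness of $\mathcal{X}$ over $R$ ensures that a uniformizer $\pi \in R$ pulls back to a rational function on $\mathcal{X}$ with divisor $X_k = \sum_{k=1}^m Y_k$, and hence $\sum_k [Y_k] = 0$ in $\operatorname{Pic}(\mathcal{X})$. Restricting to $Y_i$ gives
\[
N_{Y_i/\mathcal{X}} \;=\; \mathcal{O}_\mathcal{X}(Y_i)|_{Y_i} \;\cong\; \mathcal{O}_{Y_i}\Bigl(-\sum_{k \neq i} Y_{k,i}\Bigr),
\]
using once more that strict semi-stability makes each $Y_{k,i}$ a Cartier divisor on $Y_i$. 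Intersecting each $Y_{k,i} \subset Y_i$ with $\gamma_i$ exactly as in the previous case yields $(\iota_{k,i})_\ast(\gamma_i|_{Y_{k,i}})$, and summing with the minus sign above recovers the stated formula.

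The explicit description \eqref{eq:ExplicitDescriptionOfObstructionMap} then follows by linearity. The only real obstacle I anticipate is the bookkeeping for refined Gysin pullbacks; strict semi-stability is precisely the hypothesis that guarantees every intersection encountered is transverse and that each Gysin pullback collapses to an ordinary Cartier pullback, so no excess-intersection correction appears.
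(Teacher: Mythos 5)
Your argument is correct and is essentially the proof of \cite[Lemma 3.2]{PS23}, which this paper only cites rather than reproves: the case $j\neq i$ by compatibility of the divisor Gysin map with proper pushforward along the Cartesian square, and the case $j=i$ via the self-intersection formula together with the triviality of $\mathcal{O}_{\mathcal{X}}\bigl(\sum_k Y_k\bigr)$ coming from $\sum_k Y_k = \divisor(\pi)$. One small point of attribution: the multiplicity-one statement $\divisor(\pi)=\sum_k Y_k$ uses that the special fibre is \emph{reduced} (part of strict semi-stability), not merely flatness of $\mathcal{X}/R$, which only guarantees that $\pi$ is a nonzerodivisor.
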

    
     In \cite{PS23}, Pavic and Schreieder make two additional observations about this obstruction map $\Phi_{\mathcal{X}}$, which we also recall here: Firstly, the concrete description of $\Phi_{\mathcal{X}}$ in \eqref{eq:ExplicitDescriptionOfObstructionMap} allows us to conclude that $\deg \Phi_\mathcal{X}(\gamma) = 0$ for all $\gamma \in \CH_1(X_k)$. Secondly, for any unramified extension of DVR's $A/R$, i.e. $R \to A$ injective and local morphism of DVR's with $m_R \cdot A = m_A$, the base-change $\mathcal{X}_A = \mathcal{X} \times_R A$ is a strictly semi-stable $A$-scheme. Indeed, since $A/R$ is unramified, the residue field $L$ of $A$ is isomorphic to $k \otimes_ R A$, i.e. the special fibre of $\mathcal{X}_A \to \Spec R$ is the base-extension with $L$ of the special fibre $X_k$ of $\mathcal{X} \to \Spec R$. Thus we get for any unramified extension $A/R$ with residue field $L$ an obstruction map
    $$
        \Phi_{\mathcal{X}_A} \colon \CH_1(X_L) \longrightarrow \Ker \left(\bigoplus\limits_{i=1}^m \CH_0(Y_{i,L}) \overset{\deg}{\longrightarrow} \Z\right).
    $$
    Studying these maps can give an obstruction to the decomposition of the diagonal of the geometric generic fibre.
    
    \begin{theorem}[{\cite[Theorem 4.1]{PS23}}]\label{thm:PavicSchreieder}
        Let $R$ be a discrete valuation ring with algebraically closed residue field $k$ and let $\mathcal{X} \to \Spec R$ be a strictly semi-stable projective $R$-scheme whose special fibre $X_k = \bigcup\limits_{i = 1}^m Y_i$ is a chain of Cartier divisors, i.e. only neighboring components intersect non-trivially. Assume that the geometric generic fibre of $\mathcal{X} \to \Spec R$ has a decomposition of the diagonal. Then for any unramified extension $A/R$ of DVR, with induced extension $L/k$ of residue fields, the natural map
        $$
            \Phi_{\mathcal{X}_A} \colon \CH_1(X_L)/2 \longrightarrow \Ker \left(\bigoplus\limits_{i = 1}^m \CH_0(Y_{i,L})/2 \overset{\deg}{\longrightarrow} \Z/2\right)
        $$
        is surjective.
    \end{theorem}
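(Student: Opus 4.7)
The plan is to construct preimages by spreading out the decomposition of the diagonal from the geometric generic fibre to the total space $\mathcal{X}_A$. First I would reduce the target: $\Ker\bigl(\bigoplus_{i} \CH_0(Y_{i,L})/2 \to \Z/2\bigr)$ is generated, after possibly enlarging $A$ by an unramified extension, by classes of the form $[p] - [q]$ where $p, q$ are closed $L$-points of $X_L$. The chain hypothesis combined with the formula \eqref{eq:ExplicitDescriptionOfObstructionMap} lets me slide a point from $Y_i$ across the wedge $Y_{i,i+1}$ into $Y_{i+1}$ using a 1-cycle supported in $Y_i$ that restricts to the point on one side of the wedge and to zero on the other. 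Iterating this, I may assume $p$ and $q$ lie in the same component $Y_i$ and, after moving them within $Y_{i,L}$ by rational equivalence, inside the smooth locus $Y_i \setminus \bigcup_{j \neq i} Y_j$.

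The core construction is as follows. Points $p, q$ lying in the smooth locus of $\mathcal{X}_A$ along the special fibre lift to sections $\tilde p, \tilde q \colon \Spec A \to \mathcal{X}_A$. By hypothesis the geometric generic fibre admits a decomposition of the diagonal and hence has universally trivial $\CH_0$, so $[\tilde p_{\bar K}] = [\tilde q_{\bar K}]$ in $\CH_0(X_{\bar K})$, witnessed by a finite collection of curves $C_j \subset X_{\bar K}$ and rational functions $f_j \in \bar K(C_j)^\times$ with $\sum_j \divisor(f_j) = [\tilde p_{\bar K}] - [\tilde q_{\bar K}]$. Descending this data to an unramified enlargement $A'/A$ (possible by enlarging $A$ suitably, using that $k$ is algebraically closed) and taking closures $\overline{C_j} \subset \mathcal{X}_{A'}$, I obtain the rational equivalence $[\tilde p] - [\tilde q] + \iota_\ast \gamma = 0$ in $\CH_1(\mathcal{X}_{A'})$, where $\gamma \in \CH_1(X_{L'})$ is the sum of the vertical parts of $\divisor(f_j)$ on the surfaces $\overline{C_j}$. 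Intersecting with each Cartier divisor $Y_i$ and applying \eqref{eq:ExplicitDescriptionOfObstructionMap}, we find $\Phi_{\mathcal{X}_{A'}, Y_i}(\gamma) = [q] - [p]$ on the component containing $p, q$ and zero on all other components, which modulo $2$ realizes the desired preimage.

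The main obstacle I anticipate lies in the descent and multiplicity bookkeeping: when descending $(C_j, f_j)$ from $\bar K$ to a DVR extension $A'/R$, one must avoid ramified extensions or absorb the ramification using the mod $2$ reduction (a ramification index of $2$ halves multiplicities modulo $2$), and components of $\overline{C_j}$ that happen to lie inside a single $Y_k$ must be shown not to produce spurious contributions in the other components. The chain structure, which forces $Y_{i,j}$ to be empty for $|i-j| \geq 2$, is essential for ruling out many of these contributions and for making the final intersection-theoretic identity reduce to the clean statement above. Verifying this parity cancellation rigorously, while simultaneously arranging every $R$-extension used in the descent to be unramified, is the technical heart of the argument.
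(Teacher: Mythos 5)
Note first that the paper does not prove this statement at all: it is imported verbatim from \cite[Theorem 4.1]{PS23} and used as a black box, so there is no internal argument to compare yours against; I can only measure your proposal against what such a proof must contain. Your overall shape is reasonable and matches the general strategy one would expect — lift closed points of the smooth locus to sections, use that a decomposition of the diagonal of the geometric generic fibre gives $[\tilde p_{\overline{K}}]=[\tilde q_{\overline{K}}]$ in $\CH_0(X_{\overline{K}})$, spread the witnessing rational equivalence out over the total space, and intersect with the Cartier divisors $Y_i$ — but two of your steps fail as written.

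The serious one is the descent step. Because the residue field $k$ is algebraically closed, once you complete $R$ (which you must do anyway to lift points to sections; an arbitrary unramified extension $A$ of an arbitrary DVR is not henselian), \emph{every} nontrivial finite extension of $K$ is totally ramified. So you cannot "descend the data to an unramified enlargement $A'/A$"; algebraic closedness of $k$ works against you here, not for you. You are forced into a ramified extension $R'/R$ of some index $e$, and then the Cartier divisor $Y_i$ pulled back to the (now singular) scheme $\mathcal{X}_{R'}$ equals $e$ times the reduced component, so intersecting your relation $[\tilde p]-[\tilde q]+\iota_\ast\gamma=0$ with $Y_i$ only yields $e\left([p]-[q]\right)\in\Ima\Phi$. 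Modulo $2$ this is vacuous whenever $e$ is even, and nothing in your argument controls the parity of $e$; your remark that "a ramification index of $2$ halves multiplicities modulo $2$" has the arithmetic backwards — it multiplies the relevant multiplicities by $2$ and kills exactly the class you are trying to produce. Circumventing this (e.g.\ via Fulton's specialization map for $\mathcal{X}_{R'}\to\Spec R'$, whose special fibre is still reduced, and then using the chain hypothesis to convert the resulting identity in $\CH_0(X_L)$ into a statement about $\Ima\Phi$) is the technical heart of the theorem, not an afterthought. The second gap is your opening reduction: formula \eqref{eq:ExplicitDescriptionOfObstructionMap} only identifies $[w]_{Y_i}$ with $[w]_{Y_{i+1}}$ modulo $\Ima\Phi$ for points $w$ that already lie on the wedge $Y_{i,i+1}$. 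A general closed point of $Y_{i,L}$ need not be rationally equivalent on $Y_{i,L}$ to a point of the wedge — that would amount to $\CH_0(Y_{i,L})$ being supported on $Y_{i,i+1}$, which is precisely the sort of statement the diagonal hypothesis is needed to establish — so you cannot first slide $p$ and $q$ into a common component and only afterwards invoke the decomposition of the diagonal; the order of quantifiers in your reduction is circular.
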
    
    \begin{remark}\label{rem:SlightSimplificationOfPavicSchreieder}
        If $R = k[[t]]$ is the formal power series in one variable, then \Cref{thm:PavicSchreieder} can be reformulated to the following because for every field extension $L/k$ the DVR $A = L[[t]]$ is an unramified extension of $R$: If the geometric generic fibre of $\mathcal{X} \to \Spec R$ admits a decomposition of the diagonal, then the morphism $$
            \Phi_{\mathcal{X}} \colon \CH_1(X_k) \longrightarrow \Ker \left(\bigoplus\limits_{i=1}^m \CH_0(Y_{i}) \overset{\deg}{\longrightarrow} \Z\right)
        $$
        is universally surjective modulo $2$.
    \end{remark}

\subsection{Specializations}

We will often use specializations of varieties, or more generally reduced algebraic schemes, not only to obstruct rationality but also to show that certain varieties are smooth. We therefore introduce this notion here, following \cite[Section 2.2]{Sch19a}. Let $Y$ and $Z$ be reduced algebraic schemes over a field $F$ and an algebraically closed field $k$, respectively. We say that \emph{$Y$ specializes (or degenerates) to $Z$} if there exists a DVR $R$ with residue field $k$ and fraction field $K$, together with an injection of fields $K \to F$ such that the following holds: There exists a proper, flat morphism $\mathcal{X} \to \Spec R$ of finite type such that $Z$ is isomorphic to its special fibre $X_k = \mathcal{X} \times_{R} k$ and $Y$ is isomorphic to the base change $X_F = (\mathcal{X} \times_R K) \times_K F$ of the generic fibre.

For the convenience of the reader and to refer to later, we sketch two well-known arguments. First we recall that it suffices to check smoothness after some proper specialization.

\begin{remark}\label{rem:CheckSmoothnessAfterSpecialization}
    With the above notation, we claim that $Y$ is smooth if $Z$ is smooth. Indeed, since smoothness is stable under extension of the base field, it suffices to check that the generic fibre $X_K$ is smooth if the special fibre $X_k$ is smooth. But this follows directly from the facts that the morphism $\mathcal{X} \to \Spec R$ is proper and that being singular is a closed condition. So we find that it suffices to check smoothness after some proper specialization.
\end{remark}

Secondly, we recall the following, see e.g. \cite[Lemma 8]{Sch19a}.

\begin{lemma}\label{lem:VeryGeneralSpecializesToOne}
    Let $f \colon \mathcal{X} \to B$ be a surjective, proper, and flat morphism of reduced, quasi-projective algebraic schemes over an algebraically closed field $k$ and assume further that $B$ is integral. Let $0 \in B$ be a closed point. Then a very general fibre specializes to the fibre $X_0$ over the point $0$ in the above sense.
\end{lemma}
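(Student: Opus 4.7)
The plan is to specialise a very general closed fibre $X_b$ to $X_0$ via a curve in $B$ through both $0$ and $b$. If $\dim B = 0$, then $B = \Spec k$, the only fibre is $X_0$, and the lemma is trivial. So assume $\dim B \geq 1$ and fix a very general closed point $b \in B$. First I would cut $B$ down to a curve: since $B$ is integral and quasi-projective, intersecting with $\dim B - 1$ sufficiently general hyperplanes containing both $0$ and $b$ yields an irreducible curve $C \subseteq B$ passing through both points. Let $\nu \colon \tilde C \to C$ denote the normalisation and pick $\tilde 0 \in \tilde C$ lying over $0$; since $\nu$ is finite and $\kappa(0) = k$ is algebraically closed, also $\kappa(\tilde 0) = k$. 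Setting $R := \mathcal{O}_{\tilde C, \tilde 0}$ produces a DVR with residue field $k$ and fraction field $K = k(C)$.

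The next step is to base-change along $\Spec R \to \tilde C \to C \hookrightarrow B$, producing a proper flat $R$-scheme $\mathcal{X}_R := \mathcal{X} \times_B \Spec R \to \Spec R$ (both properness and flatness being stable under base change). Its special fibre is $X_0$, and its generic fibre is the $K$-variety $X_{\eta_C} := \mathcal{X} \times_B \Spec K$, namely the fibre of $\mathcal{X}$ over the generic point of $C \hookrightarrow B$. This already realises the specialisation $X_{\eta_C} \leadsto X_0$ in the sense of the preceding definition.

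The main obstacle is to convert this into a statement about the closed fibre $X_b$ rather than the generic $X_{\eta_C}$. The point is that ``very general'' excludes a countable union of proper closed subsets of $B$, which is precisely what one needs in a standard constructibility argument to identify $X_b$ with $X_{\eta_C}$ after base change to a sufficiently large common overfield $F \supseteq k, K$. Combined with $\mathcal{X}_R \to \Spec R$, this identification then exhibits the required specialisation $X_b \leadsto X_0$. The hardest part is this identification step, which is the standard content of \cite[Lemma 8]{Sch19a}, the reference on which the present statement is modelled.
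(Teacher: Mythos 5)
Your geometric setup---cut $B$ down to an irreducible curve $C$ through $0$ and $b$, normalise, take $R=\mathcal{O}_{\tilde C,\tilde 0}$, and base-change to get a proper flat $R$-scheme with special fibre $X_0$ and generic fibre $X_{\eta_C}$---is exactly the paper's argument. But you stop precisely at the step that carries the content of the lemma: identifying the closed fibre $X_b$ with $X_{\eta_C}\times_K F$ for a suitable overfield $F$. Calling this ``the standard content of \cite[Lemma 8]{Sch19a}'' is circular, since the lemma you are asked to prove \emph{is} \cite[Lemma 8]{Sch19a}; you cannot defer its hardest step to itself. The paper instead invokes \cite[Lemma 2.1]{Vi13}, which says that a very general fibre of a flat proper family over an integral base is abstractly isomorphic to the geometric generic fibre of that family.

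There is also a genuine logical wrinkle hidden in your ordering. You fix the very general point $b\in B$ first and then build $C$ through $0$ and $b$. To identify $X_b$ with the geometric generic fibre of the \emph{restricted} family $\mathcal{X}_C\to C$, you need $b$ to be very general in $C$ with respect to the countably many proper closed subsets attached to $\mathcal{X}_C\to C$ --- but those subsets depend on $C$, which you chose only after fixing $b$, so ``$b$ is very general in $B$'' does not by itself give what you need. The paper sidesteps this by first applying \cite[Lemma 2.1]{Vi13} to $f$ itself: all very general fibres of $f$ are abstractly isomorphic to the geometric generic fibre of $f$, and since the specialisation relation only depends on the abstract isomorphism class of $Y$, it suffices to exhibit \emph{one} very general fibre that specialises to $X_0$. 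One then chooses a curve through $0$ not contained in any of the bad subsets and a point on it that is simultaneously very general in $B$ and in $C$; this breaks the circularity. With that reduction inserted, and with your final identification replaced by an actual appeal to \cite[Lemma 2.1]{Vi13} (or a proof of the constructibility argument you allude to), your write-up becomes a complete proof along the same lines as the paper's.
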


\begin{proof}
    A very general fibre of $f$ is abstractly isomorphic to the geometric generic fibre of $f$, see e.g. \cite[Lemma 2.1]{Vi13}. Hence it suffices to show that one very general fibre specializes to $X_0$. Fix one very general fibre $X_b$, then a very general point on a curve in $B$ through $0$ and $b$ is also a very general point of $B$, i.e. we can reduce to the case where $B$ is an (integral) curve. After passing to the normalization, we can assume further that $B$ is smooth. Thus the local ring $\mathcal{O}_{B,0}$ at $0 \in B$ is an integrally closed Noetherian local ring, i.e. a DVR, proving the lemma.
\end{proof}

By Fulton's specialization map (\cite[§20.3]{Ful98}), having a decomposition of the diagonal behaves well with respect to specializations, cf. \cite[Theorem 1.14]{CTP16}. Thus we can prove \Cref{thm:MainResultIntro} by constructing one example of a (3,3) fivefold with no decomposition of the diagonal. This is the main point in \Cref{sec:Main}. The details on why this suffices to prove \Cref{thm:MainResultIntro} can be found at the end of this paper (\Cref{cor:MainResult}).

\subsection{Alterations}
For a variety $W$, an alteration is a smooth variety $\tilde{W}$ with a surjective, proper and generically finite map to $W$. By work of de Jong \cite{deJ96}, alterations always exist, also in positive characteristic. Moreover, by work of Gabber (see \cite{IT14}), we can choose the alteration to have odd degree if the characteristic of the ground field is different from 2.

\section{Very general (3,3) fivefolds are irrational}
\label{sec:Main}
    We aim to construct an explicit example of a (3,3) complete intersection in $\projspace^7$, which does not admit a decomposition of the diagonal. Before going into the details we sketch the construction. We start with a complete intersection
    $$
        X = X_1 \cap X_2 \subset \projspace^7
    $$
    of two cubic hypersurfaces $X_1$ and $X_2$. Following \cite[Theorem 7.2]{NO22}, we specialize $X_2$ into a union of a hyperplane and a quadric hypersurface in $\projspace^7$. This specialization yields a family $\mathcal{X} \to \Spec k[[t]]$ with $X$ as the geometric generic fibre. The special fibre of that family is the union of a cubic hypersurface $Y \subset \projspace^6$ and a (2,3) complete intersection $Z \subset \projspace^7$ meeting at a (2,3) fourfold $W$. This family is not strictly semi-stable because the total space $\mathcal{X}$ is singular. Moreover, the obstruction to rationality used in \cite[Theorem 7.2]{NO22} lies in $W$ and is thus not seen by the obstruction morphism \eqref{eq:ObstructionMap}. By blowing up one component of the special fibre, we ensure that the family is semi-stable. We can then blow up $W$ to introduce a top-dimensional component stably birational to $W$, which is seen by the obstruction morphism.
    
    We end up with a strictly semi-stable family $\Tilde{\mathcal{X}} \to \Spec k[[t]]$. Its special fibre has three irreducible components $\Tilde{Y}$, $P_W$ and $Z$. Here $\Tilde{Y}$ is the blow-up of $Y$ in the singular locus $S$ of the total space $\mathcal{X}$ and $P_W$ is a $\projspace^1$-bundle over $W$. The details of this construction are presented in the next section.
    
    We aim to show that the homomorphism
    $$
        \Phi_{\Tilde{\mathcal{X}}} \colon \CH_1(\Tilde{X}_k) \longrightarrow \Ker \left(\CH_0(\Tilde{Y}) \oplus \CH_0(P_W) \oplus \CH_0(Z) \overset{\deg}{\longrightarrow} \Z \right)
    $$
    is not universally surjective modulo $2$. It then follows from \Cref{thm:PavicSchreieder} and \Cref{rem:SlightSimplificationOfPavicSchreieder} that the geometric generic fibre of $\Tilde{\mathcal{X}} \to \Spec R$ admits no decomposition of the diagonal. Aiming for a contradiction we assume that the homomorphism is universally surjective modulo $2$, in particular that the zero-cycle \begin{equation}\label{eq:ZeroCycleDiagonal-Point}
        \delta_{P_W} - z_{k(P_W)} \in \CH_0(P_{W,k(P_W)})
    \end{equation}
    is contained in the image of $\Phi_{\Tilde{\mathcal{X}}}$, basechanged to $k(P_W)$, modulo $2$. To find a contradiction we need to control the image of $\Phi_{\Tilde{\mathcal{X}}}$, in particular the involved Chow groups. To do this we follow the approach in \cite{PS23}. The  Chow groups are hard to compute, but by using Fulton's specialization map (\Cref{lem:FultonSpecialization}) we can specialize further to control the Chow groups. We conclude that if the zero cycle \eqref{eq:ZeroCycleDiagonal-Point} is contained in the image of $\Phi_{\Tilde{\mathcal{X}}}$ modulo $2$, then it must also be contained in the image of the natural homomorphism
    \begin{equation}\label{eq:NaturalExtensionOfCH0OfPW}
        \CH_0(P_W)/2 \longrightarrow \CH_0(P_{W,k(P_W)})/2.
    \end{equation}
    To obtain the final contradiction, we note that $W$ is chosen to be birational to the quadric surface bundle studied by Hassett, Pirutka and Tschinkel in \cite{HPT18} and therefore has a nonzero unramified cohomology class. By computing the Merkurjev pairing of this class and the diagonal class $\delta_{P_W}$, and comparing it with the pairing with classes in the image of \eqref{eq:NaturalExtensionOfCH0OfPW}, we obtain a contradiction, proving that $\Phi$ is not universally surjective.

    \subsection{A strictly semi-stable family}\label{sec:sssfamily}
    We construct a strictly semi-stable family as outlined above. Let $k_0$ be an algebraically closed field of characteristic different from $2$, and let \begin{equation}\label{eq:k}
        k = \overline{k_0(\alpha,\beta,\gamma)}
    \end{equation}
    be the algebraic closure of a purely transcendental field extension of transcendence degree $3$ over $k_0$. The parameters $\alpha,\beta$ and $\gamma$ allow us to degenerate the involved varieties in order to make the Chow groups more accessible. To obtain the existence of a non-trivial unramified cohomology class we need to consider the following two polynomials.

    \begin{definition}
        We define the following polynomials in $k_0[x_0,\dots,x_6]$:
        \begin{align*}
            c_0 &= x_0^2 x_5 + x_1^2 x_4 + x_2^2 x_6 + x_3 (x_3^2 + x_4^2 + x_5^2 - 2x_3(x_4 + x_5 + x_6)), \\
            q_0 &= x_3 x_6 - x_4 x_5.
        \end{align*}
    \end{definition}

    The (2,3)-fourfold $W_0$ given by the vanishing of these two polynomials is precisely the one studied in \cite{Ska23}, and is birational to the quadric surface bundle studied in \cite{HPT18}, see \cite[Corollary 3.8]{Ska23}. Moreover, the second named author showed in \cite{Ska23} that $W_0$ does not admit a decomposition of the diagonal.
    
    We need to pick the exact equations defining the specialization we use with some care. To use \Cref{thm:PavicSchreieder} we need the specialization to be strictly semi-stable, and in particular the components of the special fibre must be smooth. However, we must also ensure that we can specialize further afterwards to simplify the Chow groups. Finally, we must take care that even after this specialization, we can use what we know about $W_0$ to obtain a contradiction. We therefore choose polynomials in the following way:
    
    \begin{definition}\label{def:polynomials}
        Let $k$ be defined as in \eqref{eq:k}. Consider the following polynomials in $k[x_0,\dots,x_7]$ with $\charac k \neq 2$:
        \begin{align*}
            c_{\alpha,\beta,\gamma} &:= c_0 + \gamma \left(x_6 p_3 + c_3\right) + \beta c_2 + \alpha c_{1}, \\
            q_{\alpha,\beta} &:= q_0 + \beta (x_3 x_7 + q_2) + \alpha q_1, \\
            f_{\alpha,\beta} &:= x_6^3 + \beta f_2 + \alpha f_1,
        \end{align*}
        where $p_3, c_3 \in k[x_0,\dots,x_5]$, $c_2,f_2,q_2 \in k[x_0,\dots,x_6]$, and $c_1,f_1,q_1 \in k[x_0,\dots,x_7]$ are general polynomials of degree
        $$
            \deg c_1 = \deg c_2 = \deg c_3 = \deg f_1 = \deg f_2  = 3, \ \deg q_1 = \deg q_2 = \deg p_3 = 2,
        $$
        i.e. $c_{\alpha,\beta,\gamma}$ and $f_{\alpha,\beta}$ are homogenous polynomials of degree 3 and $q_{\alpha,\beta}$ is a homogenous polynomial of degree 2. Addionally we choose them such that the following varieties are smooth:
        \begin{align*}
            &\left\{c_3 = 0\right\}, \ \left\{c_3 = p_3 = 0\right\} \subset \projspace^5, \\
            &\left\{c_2 = 0\right\}, \ \left\{c_2 = f_2 = 0\right\}, \ \left\{c_2 = q_2 = 0\right\}, \ \left\{c_2 = q_2 = x_3 = 0\right\} \subset \projspace^6, \\
            &\left\{c_1 = 0\right\}, \ \left\{c_1 = q_1 = 0\right\}, \ \left\{c_1 = f_1 = 0\right\}, \ \left\{c_1 = q_1 = f_1 = x_7 = 0\right\} \subset \projspace^7.
        \end{align*}
        Moreover, the generality assumption includes that the varieties \begin{equation}\label{eq:weakerassumption}
            \{q_1 = 0\}, \ \{f_1 = 0\} \subset \projspace^7_k \text{ are smooth along } \{c_1 = f_1 = q_1 = x_7 = 0\} \subset \projspace^7.
        \end{equation}
    \end{definition}

    \begin{remark}
        \label{rem:ExplicitEquations}
        The existence of such polynomials follows from Bertini's theorem. Note that the generality condition \eqref{eq:weakerassumption} can be replaced by the stronger condition that $\{q_1 = 0\}, \ \{f_1 = 0\} \subset \projspace^7$ are smooth to make the Bertini-type argument for the existence more immediate.
        
        For completeness, we include a possible specific choice of polyonomials. In $\charac k \neq 3$, we can pick the following:
        \begin{align*}
            p_3 &:= \sqrt[3]{4} (x_1x_2 + x_4x_5) + x_3^2 \in k_0[x_0,\dots,x_5], \\
            c_3 &:= x_0^3 + x_1^3 + x_2^3 + x_3^3 + x_4^3 + x_5^3 \in k_0[x_0,\dots,x_5], \\
            c_2 &:= c_3 + x_6^3 \in k_0[x_0,\dots,x_6], \\
            c_1 &:= p_1 + x_7^3 \in k_0[x_0,\dots,x_7], \\
            q_2 &:= p_3 + x_6^2 \in k_0[x_0,\dots,x_6], \\
            f_2 &:= x_0^3 - x_1^3 + \rho x_2^3 - \rho x_3^3 + \rho^2 x_4^2 - \rho^2 x_5^2 \in k_0[x_0,\dots,x_5],
        \end{align*}
        where $\rho \in k_0$ is a primitive third root of unity and $\sqrt[3]{4} \in k_0$ is a cube root of $4$ and $p_1, \ q_1,$ and $f_1$ are general polynomials in $k[x_0,\dots,x_6]$ such that the hypersurfaces cut out by these polynomials as well as all their intersections are smooth;
        
        In $\charac k = 3$ we can pick,
        \begin{align*}
            p_3 &:= x_1^2 + x_2^2 + x_3^2 + x_4^2 + x_5^2 \in k_0[x_0,\dots,x_5], \\
            c_3 &:= x_0^3 + x_0 x_1^2 + x_1 x_2^2 + x_2 x_4^2 + x_4 x_5^2 + x_5 x_3^2 \in k_0[x_0,\dots,x_5], \\
            c_2 &:= c_3 + x_3 x_6^2 \in k_0[x_0,\dots,x_6], \\
            c_1 &:= c_2 + x_6  x_7^2 \in k_0[x_0,\dots,x_7], \\
            q_2 &:= p_3 - x_6^2 \in k_0[x_0,\dots,x_6], \\
            q_1 &:= q_2 + x_7^2 \in k_0[x_0,\dots,x_7], \\
            f_2 &:= x_1^2 x_2 + x_2^2 x_4 + x_4^2 x_5 + x_5^2 x_3 + x_3^2 x_6 + x_6^3 \in k_0[x_0,\dots,x_6], \\
            f_1 &:= f_2 + x_6^2 x_7 + x_7^3 \in k_0[x_0,\dots,x_7].
        \end{align*}
        Note that with these choices the varieties $\{q_1 = 0\}$ and $\{f_1 = 0\} \subset \projspace^7$ are not smooth, but they satisfy the weaker assumption \eqref{eq:weakerassumption}.
    \end{remark}

    We construct the model $\mathcal{X} \to \Spec k[[t]]$, inspired by \cite[Theorem 7.2]{NO22}. Let $R := k[[t]]$ and consider the $R$-scheme
    $$
        \mathcal{X} := \left\{c_{\alpha,\beta,\gamma} = t f_{\alpha,\beta} + x_7 q_{\alpha,\beta} = 0\right\} \subset \projspace^7_R.
    $$
    The special fibre $X_k$ of $\mathcal{X} \to \Spec R$ has two irreducible components, namely a cubic fivefold $Y := \{c_{\alpha,\beta,\gamma} = x_7 = 0\} \subset \projspace^7_k$ and a (2,3) complete intersection $Z := \{c_{\alpha,\beta,\gamma} = q_{\alpha,\beta} = 0\} \subset \projspace^7_k$. We denote their scheme-theoretic intersection by $W := Y \cap Z$.

    \begin{lemma}\label{lem:ComponentsOfSpecialFibreAreSmooth}
        The varieties $Y,\ Z,$ and $W$ in $\projspace^7$ are smooth. The geometric generic fibre $$
            X_{\overline{K}} = \left\{c_{\alpha,\beta,\gamma} = f_{\alpha,\beta} + t^{-1} x_7 q_{\alpha,\beta} = 0 \right\} \subset \projspace^7_{\overline{K}}
        $$
        of $\mathcal{X} \to \Spec R$ is a smooth (3,3) complete intersection.
    \end{lemma}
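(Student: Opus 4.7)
The plan is to establish all four smoothness assertions by a uniform application of \Cref{rem:CheckSmoothnessAfterSpecialization}, reducing each to a condition recorded in \Cref{def:polynomials}. The key point is that the parameters $\alpha, \beta, \gamma$ (respectively $t$) are transcendental over $k_0$ (respectively over $k$), so each variety can be identified with the geometric generic fibre of a one-parameter family over a DVR obtained by treating the chosen parameter as the uniformizer. For instance, the substitution $s = 1/\beta$ exhibits a $\beta$-varying variety as the geometric generic fibre of a family over $\Spec k_0(\alpha,\gamma)[[s]]$ whose special fibre at $s = 0$ retains only the $\beta$-leading term of each defining equation; \Cref{rem:CheckSmoothnessAfterSpecialization} then reduces smoothness of the original variety to smoothness of this special fibre.

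For $Y = \{c_{\alpha,\beta,\gamma} = x_7 = 0\}$ I would specialize $\beta \to \infty$, keeping only the term $\beta c_2$ in $c_{\alpha,\beta,\gamma}$, so that $Y$ degenerates to $\{c_2 = x_7 = 0\} \subset \projspace^7$, i.e.\ the smooth cubic fivefold $\{c_2 = 0\} \subset \projspace^6$ listed in \Cref{def:polynomials}. For $Z = \{c_{\alpha,\beta,\gamma} = q_{\alpha,\beta} = 0\}$ I would specialize $\alpha \to \infty$, replacing both defining equations by their $\alpha$-leading terms and obtaining $\{c_1 = q_1 = 0\} \subset \projspace^7$, smooth by assumption. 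For $W = Y \cap Z = \{c_{\alpha,\beta,\gamma} = q_{\alpha,\beta} = x_7 = 0\}$, the specialization $\beta \to \infty$ works again: the term $\beta x_3 x_7$ in $q_{\alpha,\beta}$ vanishes on $\{x_7 = 0\}$, so $W$ degenerates to the smooth $\{c_2 = q_2 = 0\} \subset \projspace^6$ from \Cref{def:polynomials}.

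For the geometric generic fibre $X_{\overline K}$ I propose an iterated specialization. Writing $s = t^{-1}$ and scaling the second defining equation by $s$, the variety $X_{\overline{K}}$ is realized as the geometric generic fibre of the family $\{c_{\alpha,\beta,\gamma} = f_{\alpha,\beta} + s x_7 q_{\alpha,\beta} = 0\}$ over a DVR with uniformizer $s$, whose special fibre at $s = 0$ is $\{c_{\alpha,\beta,\gamma} = f_{\alpha,\beta} = 0\}$ over $k$. Specializing once more by $\alpha \to \infty$, this further degenerates to $\{c_1 = f_1 = 0\} \subset \projspace^7$, smooth by \Cref{def:polynomials}; two applications of \Cref{rem:CheckSmoothnessAfterSpecialization} then yield the smoothness of $X_{\overline K}$. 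Finally, both defining equations $c_{\alpha,\beta,\gamma}$ and $t f_{\alpha,\beta} + x_7 q_{\alpha,\beta}$ of $X_{\overline K}$ are homogeneous of degree $3$, and smoothness forces them to be coprime so that the zero locus has codimension $2$; hence $X_{\overline K}$ is a smooth $(3,3)$ complete intersection.

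The main subtlety I anticipate is bookkeeping: justifying that a specific transcendental element of $k$ or $\overline K$ can legitimately play the role of the uniformizer of a DVR so that \Cref{rem:CheckSmoothnessAfterSpecialization} applies. This is manageable because any transcendental parameter generates a purely transcendental extension of the base field and therefore corresponds to the generic point of an $\mathbb{A}^1$ over that base, which specializes to the closed point where the desired leading-term limit is attained.
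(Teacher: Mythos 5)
Your proposal is correct and follows essentially the same route as the paper: the paper's proof likewise invokes \Cref{rem:CheckSmoothnessAfterSpecialization} and degenerates $Y$, $W$ via $\beta \to \infty$ to $\{c_2 = 0\}$ and $\{c_2 = q_2 = 0\}$, $Z$ via $\alpha \to \infty$ to $\{c_1 = q_1 = 0\}$, and $X_{\overline{K}}$ via $t \to \infty$ and $\alpha \to \infty$ to $\{c_1 = f_1 = 0\}$, all smooth by \Cref{def:polynomials}. Your extra bookkeeping on realizing each parameter as a uniformizer, and the observation that $\beta x_3 x_7$ dies on $\{x_7 = 0\}$, are correct details the paper leaves implicit.
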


    \begin{proof}
        Recall from \Cref{rem:CheckSmoothnessAfterSpecialization} that it suffices to check smoothness after some specialization. Note that
        \begin{align*}
            Y \quad &\text{specializes via } \ \beta \to \infty &&\text{to } \left\{c_2 = 0\right\} \subset \projspace^6, \\
            Z \quad &\text{specializes via } \ \alpha \to \infty &&\text{to } \left\{c_1 = q_1 = 0 \right\} \subset \projspace^7, \\
            W \quad &\text{specializes via } \ \beta \to \infty &&\text{to } \left\{c_2 = q_2 = 0\right\} \subset \projspace^6, \\
            X_{\overline{K}} \quad &\text{specializes via } \ t \to \infty \text{ and } \alpha \to \infty &&\text{to } \left\{c_1 = f_1 = 0\right\} \subset \projspace^7.
        \end{align*}
        The varieties on the right hand side are smooth by our choices in \Cref{def:polynomials}.
    \end{proof}
    Our current model $\mathcal{X} \to \Spec R$ is proper and flat. Moreover the irreducible components of the special fibre and their intersections are smooth. However the components of the special fibre of $\mathcal{X} \to \Spec R$ are not Cartier in $\mathcal{X}$, i.e. $\mathcal{X} \to \Spec R$ is not strictly semi-stable.
    \begin{lemma}\label{lem:SingularLocusOfCurlyX}
        The singular locus of the total space $\mathcal{X}$ is given by
        $$
            S := \left\{c_{\alpha,\beta,\gamma} = f_{\alpha,\beta} = q_{\alpha,\beta} = x_7 = t = 0 \right\} \subset \mathcal{X}.
        $$
        Furthermore, $S$ is smooth and $\mathcal{X}$ has ordinary quadratic singularities along $S$.
    \end{lemma}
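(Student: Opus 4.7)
My approach is to apply the Jacobian criterion to $\mathcal{X}\subset \projspace^7_R$, viewed as a complete intersection cut out by $F_1 := c_{\alpha,\beta,\gamma}$ and $F_2 := t f_{\alpha,\beta} + x_7 q_{\alpha,\beta}$. The key point is that $F_1$ is independent of $t$, while $\partial F_2/\partial t = f_{\alpha,\beta}$ and $\partial F_2/\partial x_7 = t\, \partial f/\partial x_7 + q_{\alpha,\beta} + x_7\, \partial q/\partial x_7$. A point $p\in \mathcal{X}$ is regular iff the resulting $2\times 9$ Jacobian matrix in the variables $(x_0,\ldots,x_7,t)$ has rank $2$.

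The plan is a case analysis on the location of $p$. If $t(p)\neq 0$, regularity follows from smoothness of the geometric generic fibre (\Cref{lem:ComponentsOfSpecialFibreAreSmooth}). For $p\in Y\setminus W$, one has $t=x_7=0$ and $q\neq 0$, so the second row reduces to $(0,\ldots,0,q,f)$, supported only in the $x_7$- and $t$-columns; by smoothness of $Y$, the first row $(\nabla c,0)$ has a nonzero entry in some column $x_i$ with $i\leq 6$, giving rank $2$. The case $p\in Z\setminus W$ is symmetric, using smoothness of $Z$. For $p\in W\setminus S$ we have $q=0$ but $f\neq 0$, the second row becomes $(0,\ldots,0,0,f)$, and smoothness of $Y$ again yields rank $2$. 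Finally, at $p\in S$ the second row is identically zero, so the Jacobian has rank at most $1$; this identifies $\Sing\mathcal{X} = S$. Smoothness of $S$ then follows from \Cref{rem:CheckSmoothnessAfterSpecialization} applied to the specialization $\alpha\to\infty$, which sends $S$ to the smooth variety $\{c_1=q_1=f_1=x_7=0\}\subset\projspace^7$ given in \Cref{def:polynomials}.

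For the ordinary quadratic assertion, fix $p\in S$. Since $Y$ is smooth at $p$, so is the hypersurface $\{c_{\alpha,\beta,\gamma}=0\}$, and $\mathcal{X}$ is cut out inside this smooth $7$-fold by $F_2$ alone. Using that $S$ is smooth inside $Y$ (so $f|_Y$ and $q|_Y$ extend to a system of local parameters near $p$), I would choose formal local coordinates $(t,x_7,u_1,\ldots,u_5)$ on $\{c_{\alpha,\beta,\gamma}=0\}$ centered at $p$ with $f_{\alpha,\beta}|_Y = u_4$ and $q_{\alpha,\beta}|_Y = u_5$. Since neither $f$ nor $q$ depends on $t$, one obtains
\[
    F_2 = t(u_4 + x_7 g) + x_7(u_5 + x_7 h)
\]
for regular functions $g,h$, so the quadratic part of $F_2$ at $p$ equals
\[
    Q = t u_4 + x_7 u_5 + g(p)\, t x_7 + h(p)\, x_7^2.
\]
A block determinant computation on the $4\times 4$ submatrix indexed by $(t,x_7,u_4,u_5)$ gives determinant $\tfrac{1}{16}$ independently of $g(p), h(p)$, so $Q$ has maximal rank $4$ with kernel $T_p S = \langle u_1,u_2,u_3\rangle$. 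This rank equals the codimension of $S$ in the ambient smooth $7$-fold, exactly the condition that $\mathcal{X}$ has an ordinary quadratic singularity along $S$.

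I expect the main obstacle to be rigorously setting up the formal coordinate change on $\{c_{\alpha,\beta,\gamma}=0\}$ so that the restrictions $f|_Y, q|_Y$ become coordinate functions; this requires the smoothness of $S$ inside $Y$ proved in the previous step. Everything else reduces to the Jacobian case analysis and a single explicit $4\times 4$ determinant.
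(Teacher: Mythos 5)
Your argument is correct, and for the two main claims --- that $\Sing\mathcal{X}=S$ and that $S$ is smooth --- it is essentially the paper's proof: the same Jacobian with second row $(t\,\partial f+x_7\,\partial q,\dots,\,t\,\partial_7f+q+x_7\,\partial_7q,\,f)$, the same use of smoothness of the geometric generic fibre to confine $\Sing\mathcal{X}$ to the special fibre, the same appeal to smoothness of $Y$ and $Z$ to rule out singular points off $S$, and the same specialization $\alpha\to\infty$ for smoothness of $S$. The one genuinely different step is the ordinary quadratic singularity. The paper invokes the generality assumption \eqref{eq:weakerassumption} (that $\{f_{\alpha,\beta}=0\}$ and $\{q_{\alpha,\beta}=0\}$ are smooth along $S$, checked after $\alpha\to\infty$) to identify the tangent cone of $\{tf_{\alpha,\beta}+x_7q_{\alpha,\beta}=0\}$ with that of $\{tx+yz=0\}$ and then checks transversality with the tangent space of the smooth hypersurface $\{c_{\alpha,\beta,\gamma}=0\}$. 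You instead extract the needed nondegeneracy from the smoothness of $S$ itself: since $S$ is smooth of the expected dimension $3$, the differentials of $c,f,q,x_7,t$ are independent along $S$, so $t,x_7,f,q$ extend to a regular system of parameters on the smooth sevenfold $\{c_{\alpha,\beta,\gamma}=0\}$, and the quadratic part of $F_2$ is then manifestly of rank $4$ (in characteristic $\neq 2$) with kernel $T_pS$. This is a legitimate and slightly more self-contained variant --- in fact you could take $u_4=f$ and $u_5=q$ on the nose, so that $F_2=tu_4+x_7u_5$ exactly, and dispense with the correction terms $g,h$ and the $4\times 4$ determinant. What the paper's route buys is brevity; what yours buys is an explicit normal form and one fewer appeal to the generality assumptions of \Cref{def:polynomials}.
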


    \begin{proof}
        First we show that $S$ is smooth. By definition, $S$ is isomorphic to the variety
        $$
            \left\{c_0 + \gamma(x_6 p_3 + c_3) + \beta c_2 + \alpha c_1 = q_0 + \beta q_2 + \alpha q_1 = x_6^3 + \beta f_2 + \alpha f_1 = x_7 = 0\right\} \subset \projspace^7_k.
        $$
        We note that $S$ specializes via $\alpha \to \infty$ to $\{c_1 = f_1 = q_1 = x_7 = 0\} \subset \projspace^7$ which is smooth by assumption in \Cref{def:polynomials}. Hence $S$ is smooth by \Cref{rem:CheckSmoothnessAfterSpecialization}.

        Next we check that $S$ is indeed the singular locus of $\mathcal{X}$. Recall that the singular locus $\Sing \mathcal{X}$ of $\mathcal{X}$ is given by the vanishing of the defining equations of $\mathcal{X}$ as well as all minors of the Jacobian. The Jacobian of $\mathcal{X}$ is given by
        \begin{equation}\label{eq:JacobianOfCurlyX}
            \begin{pmatrix}
                 \partial_0 c_{\alpha,\beta,\gamma} & \dots & \partial_6 c_{\alpha,\beta,\gamma} & \partial_7 c_{\alpha,\beta,\gamma} & 0 \\
                 t \partial_0 f_{\alpha,\beta} + x_7 \partial_0 q_{\alpha,\beta} & \dots & t \partial_6 f_{\alpha,\beta} + x_7 \partial_6 q_{\alpha,\beta} & t \partial_7 f_{\alpha,\beta} + q_{\alpha,\beta} + x_7 \partial_7 q_{\alpha,\beta} & f_{\alpha,\beta}
            \end{pmatrix}.
        \end{equation}
        Obviously, the variety $S$ is contained in $\Sing \mathcal{X}$ because the defining equation and the second row of \eqref{eq:JacobianOfCurlyX} vanish. We show the opposite inclusion: Since the geometric generic fibre $X_{\overline{K}}$ is smooth by \Cref{lem:ComponentsOfSpecialFibreAreSmooth}, the singular locus of $\mathcal{X}$ is contained in the special fibre. We further note that $f_{\alpha,\beta}$ vanishes at every point of the singular locus of $\mathcal{X}$, because $\{c_{\alpha,\beta,\gamma} = 0\} \subset \projspace^7_k$ is smooth (as it specializes via $\alpha \to \infty$ to the smooth variety $\{c_1 = 0\} \subset \projspace^7$). In particular, we see that the singular locus is contained in $$
            \left\{c_{\alpha,\beta,\gamma} = f_{\alpha,\beta} = t = x_7 q_{\alpha,\beta} = 0\right\} \subset \mathcal{X}.
        $$
        Hence it suffices to show under the assumption $c_{\alpha,\beta,\gamma} = f_{\alpha,\beta} = t = 0$ that $$
            x_7 = 0 \Longleftrightarrow q_{\alpha,\beta} = 0.
        $$
        We start by showing the implication left to right. Note that $\{c_{\alpha,\beta,\gamma} = x_7 = 0\} \subset \projspace^7_k$ is smooth by \Cref{rem:CheckSmoothnessAfterSpecialization} as it specializes via $\beta \to \infty$ to the smooth variety $\{c_2 = 0\}$. Thus, we conclude that $q_{\alpha,\beta} = 0$ as wanted because otherwise \eqref{eq:JacobianOfCurlyX} has rank $2$. For the implication right to left, we note that $$
            \{c_{\alpha,\beta,\gamma} = q_{\alpha,\beta} = 0\} \subset \projspace^7_k
        $$
        is smooth because it specializes via $\alpha \to \infty$ to the smooth variety $\{c_1 = q_1 = 0\} \subset \projspace^7$. Thus, $x_7$ has to be equal to $0$ as otherwise the Jacobian would have full rank. This shows $\Sing \mathcal{X} \subset S$ and thus $S = \Sing \mathcal{X}$.

        Lastly, we describe the type of the singularities of $\mathcal{X}$. Let $P \in S$ be any point, i.e. $P$ is a singular point of $\mathcal{X}$. The varieties $\{f_{\alpha,\beta} = 0 \}, \ \{ q_{\alpha,\beta}  = 0 \} \subset \projspace^7_k$ are smooth along $S$, because this holds after the specialization $\alpha \to \infty$ by construction in \Cref{def:polynomials}. Thus the tangent cone of $\{t f_{\alpha,\beta} + x_7 q_{\alpha,\beta} = 0\} \subset \projspace^7_R$ at $P$ is Zariski locally isomorphic to the tangent cone of the ordinary quadratic singularity $\{tx + yz = 0\}$. Moreover, the tangent cone of $\{t f_{\alpha,\beta} + x_7 q_{\alpha,\beta} = 0\} \subset \projspace^7_R$ at $P$ intersects the tangent space of $\{c_{\alpha,\beta,\gamma} = 0\} \subset \projspace^7_R$ at $P$ transversely because $\{c_{\alpha,\beta,\gamma} = 0\} \subset \projspace^7_k$ is smooth (as it specializes via $\beta \to \infty$ to the smooth cubic hypersurface $\{c_1 = 0\} \subset \projspace^7$). This concludes the proof of the lemma.
    \end{proof}

    We obtain a strictly semi-stable model by blowing up one irreducible component of the special fibre.

    \begin{lemma}\label{lem:CurlyX'}
        The blow-up $\mathcal{X}' := \blowup_{Y} \mathcal{X} \to \Spec R$ is strictly semi-stable with special fibre $\Tilde{Y} \cup Z$ where $\Tilde{Y} := \blowup_S Y$. Moreover the scheme-theoretic intersection $\Tilde{Y} \cap Z$ is isomorphic to $W$.
    \end{lemma}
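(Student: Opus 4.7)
The claim is local around the singular locus $S$ of $\mathcal{X}$: away from $S$, the map $\mathcal{X}' \to \mathcal{X}$ is an isomorphism, $\mathcal{X}$ is smooth by \Cref{lem:SingularLocusOfCurlyX}, and the smooth components $Y$ and $Z$ (\Cref{lem:ComponentsOfSpecialFibreAreSmooth}) meet transversely in the smooth $W$, so strict semi-stability and the identification $\tilde Y \cap Z \cong W$ are automatic there. The entire content of the lemma therefore lies in the local behaviour around a point $P \in S$.

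By \Cref{lem:SingularLocusOfCurlyX}, étale-locally at $P$ we may model $\mathcal{X} = \Spec A[t,x,y,z]/(tx+yz)$ for some smooth $k$-algebra $A$, in such a way that $Y = V(y,t)$, $Z = V(z,t)$, $W = V(y,z,t)$ and $S = V(x,y,z,t)$. The ideal sheaf of $Y$ on $\mathcal{X}$ is $I = (y,t)$, and since $tx + yz = 0$ on $\mathcal{X}$ the Rees algebra acquires an extra relation beyond the commutativity relation $y S_t - t S_y = 0$, namely $x S_t + z S_y = 0$. Hence $\mathcal{X}' \subset \mathcal{X} \times \projspace^1$ is covered by two affine charts: on $\{S_y \neq 0\}$, setting $u = S_t/S_y$ gives $t = uy$ and $z = -ux$, producing the smooth chart $\Spec A[x,y,u]$; on $\{S_t \neq 0\}$, setting $v = S_y/S_t$ gives $y = tv$ and $x = -zv$, producing the smooth chart $\Spec A[t,z,v]$. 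In particular $\mathcal{X}'$ is smooth.

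Reading off the special fibre on the first chart, $t = uy = 0$ factors as $\{y = 0\} \cup \{u = 0\}$. The component $\{y = 0\} \cong \Spec A[x,u]$ is the exceptional divisor of $\mathcal{X}' \to \mathcal{X}$, and the projection to $Y \cong \Spec A[x,z]$ reads $(x,u) \mapsto (x,-ux)$, which is precisely the blow-up of $V(x,z) = S$ in $Y$. Globally, this identifies the component with $\tilde{Y} = \blowup_S Y$, smooth as the blow-up of the smooth variety $Y$ along the smooth codimension-two subvariety $S$. The component $\{u = 0\} \cong \Spec A[x,y]$ maps isomorphically onto $Z \cong \Spec A[x,y]$, so the strict transform of $Z$ is isomorphic to $Z$ and in particular smooth. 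The second chart of $\mathcal{X}'$ contains only $\tilde{Y}$, since the preimage of $Z \setminus W$ there is empty ($y = tv = 0$ whenever $t = 0$). Thus the special fibre is $\tilde{Y} \cup Z$, and on the first chart $\tilde{Y}$ and $Z$ are cut out respectively by the coordinate functions $y$ and $u$, with scheme-theoretic intersection $\{y = u = 0\} \cong \Spec A[x]$ identified with $W \cong \Spec A[x]$ under the maps above. Since $y, u$ extend to a local coordinate system, this simultaneously verifies the SNC condition and the isomorphism $\tilde{Y} \cap Z \cong W$; smoothness of $W$ is from \Cref{lem:ComponentsOfSpecialFibreAreSmooth}.

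The main subtlety is the computation of the Rees algebra: without noticing the extra relation $x S_t + z S_y = 0$ forced by $tx + yz = 0$, the affine charts above would appear reducible and blowing up $Y$ would not seem to resolve the nodes. With this relation included, the charts flatten out to affine spaces, and a single blow-up both resolves the ODPs along $S$ and produces the desired strictly semi-stable model.
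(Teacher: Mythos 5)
Your argument is correct and essentially coincides with the paper's proof: both compute the two affine charts of $\blowup_Y\mathcal{X}$ explicitly and read off that the special fibre is $\Tilde{Y}\cup Z$ with $\Tilde{Y}\cap Z\cong W$, the only difference being that you work in the \'etale-local normal form $\{tx+yz=0\}$ while the paper substitutes the global equations ($t$, $x_7$, $f$, $q$) directly, which makes the identification $\Tilde{Y}=\blowup_S Y$ with $S=\{f=q=0\}\subset Y$ immediate without a gluing step. The only (harmless) omission is the routine verification that $\mathcal{X}'\to\Spec R$ is proper and flat, which the paper records explicitly since both properties are part of the definition of strict semi-stability.
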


    \begin{proof}
        The family $\mathcal{X}' \to \Spec R$ is proper and is flat by \cite[III. Proposition 9.7]{Har77}. Locally at a point of $S$, $\mathcal{X}$ has ordinary quadratic singularities (see \Cref{lem:SingularLocusOfCurlyX}) and a local computation shows that the special fibre of $\mathcal{X}'$ is given by $\Tilde{Y} \cup Z$, where $\Tilde{Y} = \blowup_S Y$. We sketch the computation for the convenience of the reader: Recall, $\mathcal{X} = \{tf + x_7 q = c = 0\} \subset \projspace^7_{k[[t]]}$ and $Y = \{t = x_7 = 0\} \subset \mathcal{X}$, where we suppressed the indices for simplicity. The blow-up $\blowup_Y \mathcal{X}$ is then given as follows:
        \begin{equation}\label{eq:localcharts}
            \begin{aligned}
                \underline{\text{in the }t\text{-chart:}} & \quad \{x_7 - t x_7' = f + x_7' q = c = 0\},  \\
                \underline{\text{in the }x_7\text{-chart:}} & \quad \{t - x_7 t' = t' f + q = c = 0\}.
            \end{aligned}
        \end{equation} The special fibre $\{t = 0\}$ consists of the two components $\{t = x_7 = 0\}$ and $\{t' = 0\}$. While the latter is isomorphic to $Z$, the former is isomorphic to $\blowup_S Y$ because $S = \{f = q = 0\} \subset Y$.

        Moreover, we find that the scheme-theoretic intersection $\Tilde{Y} \cap Z = \blowup_S W = W$, where the final equality holds because $S \subset W$ is a Cartier divisor. By \Cref{lem:ComponentsOfSpecialFibreAreSmooth} and \Cref{lem:SingularLocusOfCurlyX} $Y$, $Z$, $W$, and $S$ are smooth, so all components of the special fibre of $\mathcal{X}' \to \Spec R$ and their intersection are smooth. By construction $\Tilde{Y}$ is Cartier and $Z \subset \mathcal{X}'$ is also Cartier, because the special fibre is Cartier and reduced.
    \end{proof}

    The obstruction to rationality lies in $W$, see \cite[Theorem 7.2]{NO22}. Hence we need to blow-up $W$ to obtain a component in the special fibre which is stably birational to $W$ and can be seen by the obstruction map \eqref{eq:ObstructionMap}. In order to ensure that the model will remain strictly semi-stable, we perform a $2:1$ base change first, see also \cite[Lemma 5.5]{PS23}.

    \begin{lemma}\label{lem:TildeCurlyX}
        Let $\mathcal{X}'' := \mathcal{X}' \times_{\substack{R \to R \\ t \to t^2}} R$ be the $2{:}1$ base change of $\mathcal{X}'$. The blow-up
        \begin{equation}\label{eq:strictlysemistablemodel}
            \Tilde{\mathcal{X}} := \blowup_Z \mathcal{X}'' \longrightarrow \Spec R
        \end{equation}
        is a strictly semi-stable $R$-scheme with special fibre $\Tilde{X}_k = \Tilde{Y} \cup P_W \cup Z$, where $P_W$ is a $\projspace^1$-bundle over $W$ and $\Tilde{Y} = \blowup_S Y$ as in \Cref{lem:CurlyX'}. The intersections $\Tilde{Y} \cap P_W$ and $Z \cap P_W$ are disjoint sections of the bundle $P_W \to W$. The geometric generic fibre
        $$
            \Tilde{X}_{\overline{K}} = \left\{c_{\alpha,\beta,\gamma} = f_{\alpha,\beta} + t^{-2} x_7 q_{\alpha,\beta} = 0 \right\} \subset \projspace^7_{\overline{K}}
        $$
        is a smooth (3,3) complete intersection.
    \end{lemma}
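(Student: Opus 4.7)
The plan is to work \'etale-locally near $W$, where all the new behaviour introduced by the base change and the second blow-up is concentrated. Away from $W$ the morphism $\mathcal{X}' \to \Spec R$ is already smooth (its special fibre $\Tilde Y \cup Z$ is smooth there), so its base change $\mathcal{X}'' \to \Spec R$ is smooth away from $W$ as well; moreover $Z \subset \mathcal{X}''$ is Cartier there, so $\Tilde{\mathcal{X}} \to \mathcal{X}''$ is an isomorphism over $\mathcal{X}'' \setminus W$, and strict semi-stability on that open set is inherited from \Cref{lem:CurlyX'}. Properness of $\Tilde{\mathcal{X}} \to \Spec R$ is automatic from properness of $\mathcal{X} \to \Spec R$, and flatness over the DVR $R$ will follow once $\Tilde{\mathcal{X}}$ is known to be integral, which in turn follows from irreducibility of its generic fibre.

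\textbf{Local model near $W$.} Strict semi-stability of $\mathcal{X}'$ from \Cref{lem:CurlyX'} lets me pick \'etale-local charts around a point of $W$ in which $\mathcal{X}'$ takes the normal form $\{xy - t = 0\}$, with $\Tilde Y = \{x = 0\}$, $Z = \{y = 0\}$ and $W = \{x = y = 0\}$. The base change $t \mapsto t^2$ then presents $\mathcal{X}''$ as $\{xy - t^2 = 0\}$ \'etale-locally, a family of ordinary double points along $W$. A direct computation of $\blowup_{\{y = t = 0\}}\{xy = t^2\}$ produces two smooth affine charts, $\mathbb{A}^2_{y,u}$ with $t = uy$, $x = u^2 y$ and $\mathbb{A}^2_{x,v}$ with $t = xv$, $y = xv^2$, glued along their overlap by $v = 1/u$. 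Hence $\Tilde{\mathcal{X}}$ is smooth near $W$, and together with the previous paragraph smooth everywhere.

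\textbf{Special fibre and strict semi-stability.} Reading off $\{t = 0\}$ in the local model, the special fibre splits as $\{uy = 0\} = \{u = 0\} + \{y = 0\}$ in chart $1$ and $\{xv = 0\} = \{x = 0\} + \{v = 0\}$ in chart $2$, yielding with multiplicity one three smooth Cartier components: the strict transform of $\Tilde Y$ (which is $\{u = 0\}$), the strict transform of $Z$ (which is $\{v = 0\}$), and a new exceptional component $P_W$ obtained by gluing $\{y = 0\}$ to $\{x = 0\}$ via $v = 1/u$. Since $W$ is a Cartier divisor inside each of the smooth varieties $\Tilde Y$ and $Z$, the strict transforms are isomorphic to the originals. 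Fibrewise over $W$ the local model shows $P_W$ equals $\mathbb{A}^1_u \cup_{v = 1/u} \mathbb{A}^1_v \cong \mathbb{P}^1$, and the \'etale-local product structure of $\mathcal{X}''$ along $W$ upgrades this to a $\mathbb{P}^1$-bundle $P_W \to W$. The intersections $\Tilde Y \cap P_W = \{u = y = 0\}$ and $Z \cap P_W = \{x = v = 0\}$ are then sections corresponding to the opposite poles $u = 0$ and $v = 0$, hence disjoint; analogously $\Tilde Y \cap Z = \emptyset$ since $\{u = 0\}$ and $\{v = 0\}$ lie in disjoint charts separated by the transition $v = 1/u$. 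The required smoothness and transversality of all pairwise intersections is now immediate, and $\Tilde{\mathcal{X}} \to \Spec R$ is strictly semi-stable.

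\textbf{Geometric generic fibre, and main obstacle.} Because $Z$ lies in the special fibre of $\mathcal{X}''$, the blow-up $\Tilde{\mathcal{X}} \to \mathcal{X}''$ is an isomorphism over the generic point of $\Spec R$, so the geometric generic fibre of $\Tilde{\mathcal{X}}$ agrees with that of $\mathcal{X}''$. Base-changing the generic fibre of $\mathcal{X}$ along $t \mapsto t^2$ turns $c_{\alpha,\beta,\gamma} = tf_{\alpha,\beta} + x_7 q_{\alpha,\beta} = 0$ into $c_{\alpha,\beta,\gamma} = t^2 f_{\alpha,\beta} + x_7 q_{\alpha,\beta} = 0$, equivalently $c_{\alpha,\beta,\gamma} = f_{\alpha,\beta} + t^{-2} x_7 q_{\alpha,\beta} = 0$; its smoothness follows by the same specialization ($t \to \infty$, $\alpha \to \infty$, reducing to the smooth variety $\{c_1 = f_1 = 0\}$) that was used in the proof of \Cref{lem:ComponentsOfSpecialFibreAreSmooth}. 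The main obstacle will be the local-model step: rigorously establishing the \'etale-local normal form of $\mathcal{X}'$ along $W$, executing the blow-up computation, and verifying that the fibrewise $\mathbb{P}^1$'s assemble into a genuine $\mathbb{P}^1$-bundle rather than just a proper family with $\mathbb{P}^1$-fibres. Once that is in place, the remaining verifications are routine bookkeeping.
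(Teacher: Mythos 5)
Your proposal is correct and follows essentially the same route as the paper: base-change to get ordinary quadratic ($xy=t^2$) singularities along $W$, blow up $Z$, compute the two charts, and identify the three components, the exceptional $\projspace^1$-fibration, and its two disjoint sections. The only difference is cosmetic — the paper carries out the chart computation with the explicit equations $c, f, q, x_7, t'$ from \eqref{eq:localcharts} rather than the abstract étale-local normal form, which also settles the point you flag as the main obstacle (the exceptional divisor is visibly a conic bundle with the section $Z\cap P_W$, hence a $\projspace^1$-bundle).
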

    
    \begin{proof}
        Recall the local description of $\mathcal{X}'$ in \eqref{eq:localcharts}. From this, we see that the $2:1$-base change $\mathcal{X}''$ is given by $$
            \begin{aligned}
                \underline{\text{in the }t\text{-chart:}} & \quad \{x_7 - t^2 x_7' = f + x_7' q = c = 0\},  \\
                \underline{\text{in the }x_7\text{-chart:}} & \quad \{t^2 - x_7 t' = t' f + q = c = 0\}.
            \end{aligned}
        $$
        A similar argument as in the proof of \Cref{lem:SingularLocusOfCurlyX} shows that the singular locus of $\mathcal{X}''$ is given by $\{t' = t = x_7 = 0\} \subset \mathcal{X}'$, i.e. it is the singular locus $W$ of the special fibre. In particular we see that the $2:1$ base-change is regular away from $W$. We note that the component $Z \subset \mathcal{X}''$ is given by $\{t = t' = 0\}$ in the above local charts. Thus the blow-up $\blowup_Z \mathcal{X}''$ is given by $$
            \begin{aligned}
                \underline{\text{in the }t\text{-chart:}} & \quad \{x_7 - t^2 x_7' = f + x_7' q = c = 0\},  \\
                \underline{\text{in the }x_7-t\text{-chart:}} & \quad \{t'- t \tilde{t}' = t - x_7 \tilde{t}' = t \Tilde{t}' f + q = c = 0\}, \\
                \underline{\text{in the }x_7-t'\text{-chart:}} & \quad \{t - \Tilde{t}t' = t'\Tilde{t}^2 - x_7 = t' f + q = c = 0\}.
            \end{aligned}
        $$
        We see that the irreducible components of the special fibre are $\{t = \Tilde{t} = 0\}$, $\{\Tilde{t}' = 0\}$, and $\{x_7 = t' = 0\}$. Note that the first two components are the strict transform of $\Tilde{Y}$ and $Z$, respectively. We recognize the last component as a smooth conic bundle $P_W$ over $W$ which admits a section, e.g. by $Z \cap P_W = \{\Tilde{t}' = 0\} \subset P_W$. Hence $P_W$ is a $\projspace^1$-bundle over $W$ as claimed in the statement.
        
        Since the singularities of $\mathcal{X}''$ are ordinary quadratic singularities, the blow-up resolves them. Hence, $\Tilde{\mathcal{X}}$ is regular and in particular $\Tilde{\mathcal{X}} \to \Spec R$ is strictly semi-stable. The smoothness of the geometric generic fibre follows from \Cref{lem:CurlyX'}. 
    \end{proof}

    \subsection{Specialization}

    In the last section we constructed a strictly semi-stable family $\mathcal{X} \to \Spec R$. We aim to show that the obstruction map
    $$
        \Phi_{\Tilde{\mathcal{X}}} \colon \CH_1(\Tilde{X}_k) \longrightarrow \Ker \left(\CH_0(\Tilde{Y}) \oplus \CH_0(P_W) \oplus \CH_0(Z) \overset{\deg}{\longrightarrow} \Z \right)
    $$
    is not universally surjective modulo $2$. Specifically we try to understand the map
    \begin{equation}\label{eq:PhiTildeCurlyXMod2}
        \Phi_{\Tilde{\mathcal{X}},P_W} \colon \CH_1(\Tilde{X}_k) \longrightarrow \CH_0(P_W) \mod 2.
    \end{equation}
    From the construction of the strictly semi-stable model $\Tilde{\mathcal{X}} \to \Spec R$ in the \Cref{sec:sssfamily}, we can make a couple of observations in order to better understand the image of the map. Clearly we have a surjection,
    $$
        \CH_1(\Tilde{Y}) \oplus \CH_1(P_W) \oplus \CH_1(Z) \relbar\joinrel\twoheadrightarrow \CH_1(\Tilde{X}_k),
    $$
    The map is given by push-forwards of the corresponding inclusions of varieties. We consider first the contribution of $\CH_1(P_W)$. Since $P_W \to W$ is a $\projspace^1$-bundle by \Cref{lem:TildeCurlyX} and $W$ is smooth by \Cref{lem:ComponentsOfSpecialFibreAreSmooth}, there is an isomorphism \begin{equation}\label{eq:ChowFormulaForP1Bundle}
        \CH_0(W) \oplus \CH_1(W) \longrightarrow \CH_1(P_W).
    \end{equation}
    where the map on the first factor is the pull-back along the flat morphism $P_W \to W$ and the map on the second factor is the pushforward via a section $W \to P_W$ of the $\projspace^1$-bundle $P_W$. Since $P_W \cap Z$ is a section of $P_W \to W$, we find that the contribution of $\CH_1(W)$ in the obstruction map \eqref{eq:PhiTildeCurlyXMod2} is contained in the image of $\CH_1(Z)$ (or $\CH_1(\Tilde{Y})$). Moreover, the concrete description of the obstruction map \eqref{eq:ExplicitDescriptionOfObstructionMap} implies that the contribution of $\CH_0(W)$ vanishes in \eqref{eq:PhiTildeCurlyXMod2}. Indeed, a closed point $w \in W$ is mapped under the isomorphism \eqref{eq:ChowFormulaForP1Bundle} to the line $F_w$ over that point. By \eqref{eq:ExplicitDescriptionOfObstructionMap} the line $F_w$ is mapped via \eqref{eq:PhiTildeCurlyXMod2} to
    $$
        - [F_w] \cdot [\Tilde{Y}] - [F_w] \cdot [Z] = - 2 [z].
    $$
    where $z \in F_w$ is any point on the line $F_w \subset P_W$ and we used that $\Tilde{Y} \cap P_W$ and $Z \cap P_W$ are sections of the $\projspace^1$-bundle $P_W \to W$. Thus, the image of $\Phi_{\Tilde{\mathcal{X}},P_W}$ modulo $2$ is contained in
    $$
        \Ima \left(\CH_1(\Tilde{Y}) \oplus \CH_1(Z) \longrightarrow \CH_0(P_W)\right) \mod 2.
    $$
    Next we take a look at $\CH_1(\Tilde{Y})$. Recall that $\Tilde{Y} = \blowup_S Y$ where $S \subset \mathcal{X}$ is the singular locus of $\mathcal{X}$, see \Cref{lem:CurlyX'}. The blow-up formula for Chow groups (see e.g. \cite[Theorem 3.3 and Proposition 6.7 (e)]{Ful98}) yields a canonical isomorphism
    $$
        \CH_1(Y) \oplus \CH_0(S) \cong \CH_1(\Tilde{Y}).
    $$
    Thus we conclude that the image of $\Phi_{\Tilde{\mathcal{X}},P_W}$ modulo $2$ is contained in
    $$
        \Ima \left(\CH_1(Y) \oplus \CH_0(S) \oplus \CH_1(Z) \longrightarrow \CH_0(P_W)\right) \mod 2.
    $$
    These observations also hold after a field extension $L/k$, which shows the following.
    \begin{lemma}\label{rem:ReductionOfObstructionMapBeforeSpecialization}
        For any field extension $L/k$, the base change to $L$ of $\Phi_{\Tilde{\mathcal{X}},P_W}$ modulo $2$ has image contained in \begin{equation}\label{eq:ReductionOfObstructionMapBeforeSpecialization}
            \Ima \left(\CH_1(Y \times_k L) \oplus \CH_0(S \times_k L) \oplus \CH_1(Z \times_k L) \longrightarrow \CH_0(P_W \times_k L)\right) \mod 2.
        \end{equation}
    \end{lemma}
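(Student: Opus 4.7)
The plan is to note that \Cref{rem:ReductionOfObstructionMapBeforeSpecialization} is a formal extension of the reduction carried out in the paragraphs immediately preceding it: the argument for $L = k$ invoked only the surjection onto $\CH_1(\Tilde{X}_k)$ from the Chow groups of the three components of the special fibre, the projective bundle formula for $P_W \to W$, the explicit description \eqref{eq:ExplicitDescriptionOfObstructionMap} of the obstruction map, and the blow-up formula for $\Tilde{Y} = \blowup_S Y$. Each of these ingredients is preserved under base change along an arbitrary field extension, so I would verify compatibility step by step.

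Concretely, first I use that $\Tilde{Y}, P_W, Z \subset \Tilde{\mathcal{X}}$ are Cartier divisors in the special fibre (\Cref{lem:TildeCurlyX}), which via the localization sequence (applied after base change to $L$) yields a surjection
$$
\CH_1(\Tilde{Y}_L) \oplus \CH_1(P_{W,L}) \oplus \CH_1(Z_L) \relbar\joinrel\twoheadrightarrow \CH_1(\Tilde{X}_{k,L}).
$$
Second, because $W$ is smooth (\Cref{lem:ComponentsOfSpecialFibreAreSmooth}) and $P_W \to W$ is a Zariski-locally trivial $\projspace^1$-bundle admitting the section $Z \cap P_W$ (\Cref{lem:TildeCurlyX}), the projective bundle formula gives $\CH_1(P_{W,L}) \cong \CH_0(W_L) \oplus \CH_1(W_L)$, and this decomposition commutes with base change. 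The $\CH_1(W_L)$-summand, pushed in via the section $Z \cap P_W$, already lies in the image of $\CH_1(Z_L)$ and hence contributes nothing new to \eqref{eq:ReductionOfObstructionMapBeforeSpecialization}. The $\CH_0(W_L)$-summand sends a closed point $w$ to its fibre class $[F_w]$, and by \eqref{eq:ExplicitDescriptionOfObstructionMap} this maps to $-[F_w] \cdot [\Tilde{Y}] - [F_w] \cdot [Z] = -2[z]$, using that $\Tilde{Y} \cap P_W$ and $Z \cap P_W$ are disjoint sections of $P_W \to W$; this vanishes modulo $2$.

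Third, the blow-up formula \cite[Theorem 3.3, Proposition 6.7 (e)]{Ful98} applied to $\Tilde{Y}_L \to Y_L$ along the smooth centre $S_L$ (\Cref{lem:SingularLocusOfCurlyX}) yields $\CH_1(\Tilde{Y}_L) \cong \CH_1(Y_L) \oplus \CH_0(S_L)$, once again compatibly with base change. Assembling the three reductions gives the asserted containment of the image of the base change of $\Phi_{\Tilde{\mathcal{X}},P_W}$ modulo $2$ inside \eqref{eq:ReductionOfObstructionMapBeforeSpecialization}.

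The only genuine point to check is compatibility of the projective bundle formula and of the blow-up formula with extension of the base field from $k$ to $L$. Since $W$, $S$, $Y$, $Z$ are smooth and $P_W \to W$ is flat, both formulas behave well under flat base change, so no real difficulty arises and the proof reduces to rerunning the $L = k$ argument verbatim with subscripts $_L$ inserted.
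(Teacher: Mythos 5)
Your proposal is correct and follows essentially the same route as the paper: the paper's ``proof'' of this lemma is precisely the preceding discussion (surjection from the Chow groups of the three components, the projective bundle formula for $P_W \to W$ combined with the explicit formula \eqref{eq:ExplicitDescriptionOfObstructionMap} to kill the $\CH_0(W)$-contribution modulo $2$ and absorb the $\CH_1(W)$-contribution into $\CH_1(Z)$, and the blow-up formula for $\Tilde{Y} = \blowup_S Y$), together with the remark that all of these observations persist after an arbitrary field extension $L/k$. Your extra care in checking base-change compatibility of the projective bundle and blow-up formulas is exactly the point the paper leaves implicit.
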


    The Chow groups of the domain are still hard to describe. The transcendental parameters $\alpha,\beta,$ and $\gamma$ allow us to degenerate the varieties further. Together with Fulton's specialization map this will enable us to describe the Chow groups and understand \eqref{eq:ReductionOfObstructionMapBeforeSpecialization}.

    \begin{lemma}[Fulton's specialization map; {\cite[Lemma 5.7]{PS23}}] \label{lem:FultonSpecialization} 
        Let $B$ be a discrete valuation ring with fraction field $F$ and residue field $L$. Let $p \colon \mathcal{X} \to \Spec B$ and $q \colon \mathcal{Y} \to \Spec B$ be proper, flat $B$-schemes with connected fibres. Denote by $X_\eta,Y_\eta$ and $X_0,Y_0$ the generic and the special fibres of $p, \ q$ respectively. Assume $Y_0$ is integral, i.e. $A = \mathcal{O}_{\mathcal{Y},Y_0}$ is a discrete valuation ring, and consider the flat proper $A$-scheme $\mathcal{X}_A \to \Spec A$, given by base change of $p$. Then Fulton's specialization map induces a specialization map
        $$
            \specialization \colon \CH_i(X_\eta \times_F \overline{F}(Y_\eta)) \longrightarrow \CH_i(X_0 \times_L \overline{L}(Y_0)),
        $$
        where $\overline{F}$ and $\overline{L}$ denote the algebraic closures of $F$ and $L$, respectively, such that the following holds:
        \begin{enumerate}[label=(\arabic*)]
            \item $\specialization$ commutes with pushforwards along proper maps and pullbacks along regular embeddings;
            \item If $\mathcal{X} = \mathcal{Y}$, then $\specialization(\delta_{X_\eta}) = \delta_{X_0}$, where $\delta_{X_\eta} \in \CH_0(X_\eta \times_F \overline{F}(X_\eta))$ and $\delta_{X_0} \in CH_0(X_0 \times_L \overline{L}(X_0))$ denote the diagonal points.
        \end{enumerate}
    \end{lemma}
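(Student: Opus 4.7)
The plan is to base-change $p$ along $A = \mathcal{O}_{\mathcal{Y}, Y_0}$ and apply Fulton's classical specialization map for discrete valuation rings \cite[§20.3]{Ful98}. Since $A$ is a DVR with fraction field $\Frac(A) = F(Y_\eta)$ and residue field $\kappa(A) = L(Y_0)$, the base change $\mathcal{X}_A = \mathcal{X} \times_B \Spec A$ is flat and proper over $\Spec A$ with generic fibre $X_\eta \times_F F(Y_\eta)$ and special fibre $X_0 \times_L L(Y_0)$. Fulton's construction then yields a homomorphism
\[
    \specialization_0 \colon \CH_i\bigl(X_\eta \times_F F(Y_\eta)\bigr) \longrightarrow \CH_i\bigl(X_0 \times_L L(Y_0)\bigr),
\]
and the remaining task is to pass to algebraic closures and verify the two compatibility properties.

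For the passage to algebraic closures I would proceed by a direct limit. Any finite subextension $F'/F(Y_\eta)$ inside $\overline{F}(Y_\eta)$ is the fraction field of some extension of DVRs $A'/A$, whose residue field $L'$ can be arranged to sit inside $\overline{L}(Y_0)$. Since Fulton's specialization is compatible with flat pullback along such extensions of DVRs, the maps at each finite level assemble into a directed system. Because $\CH_i(X_\eta \times_F \overline{F}(Y_\eta))$ is the colimit of $\CH_i(X_\eta \times_F F')$ over the finite subextensions via flat pullback (and likewise on the special-fibre side), taking colimits yields the desired $\specialization$. Possible ramification between $A$ and $A'$ can be absorbed into a multiplicity factor as in Fulton's treatment and does not affect the existence of the limit.

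Property~(1) is inherited directly from the analogous compatibilities of Fulton's specialization at each finite level by \cite[§20.3]{Ful98} and survives the colimit. For property~(2), assume $\mathcal{X} = \mathcal{Y}$ so that $A = \mathcal{O}_{\mathcal{X}, X_0}$. The relative diagonal $\Delta_\mathcal{X} \subset \mathcal{X} \times_B \mathcal{X}$ pulls back along the second (flat) projection to a closed subscheme of $\mathcal{X}_A = \mathcal{X} \times_B \Spec A$; this pullback is precisely the graph of the localization $\Spec A \to \mathcal{X}$, and hence is a section of $\mathcal{X}_A \to \Spec A$ that is flat over $\Spec A$. Its generic fibre represents $\delta_{X_\eta}$ while its special fibre represents $\delta_{X_0}$. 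Since Fulton's specialization is defined by taking a flat $\Spec A$-extension of a generic-fibre cycle and restricting to the special fibre, we conclude $\specialization_0(\delta_{X_\eta}) = \delta_{X_0}$, and compatibility with the flat pullback to algebraic closures then gives $\specialization(\delta_{X_\eta}) = \delta_{X_0}$. The main technical obstacle throughout is the careful bookkeeping needed to assemble the finite-level specializations into a coherent colimit map, but this is standard once one checks that the diagonal pullback really furnishes the flat extension Fulton's construction requires.
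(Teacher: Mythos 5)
The paper itself gives no proof of this lemma; it is imported verbatim from \cite[Lemma 5.7]{PS23} and used as a black box. Your reconstruction follows the standard route that the source also takes: localize $\mathcal{Y}$ at the generic point of $Y_0$ to obtain the DVR $A$ with $\Frac(A)=F(Y_\eta)$ and $\kappa(A)=L(Y_0)$, base change $p$ to $A$, and apply Fulton's specialization to $\mathcal{X}_A\to\Spec A$. Your verification of property (2) is correct: the pullback of the relative diagonal is a section of $\mathcal{X}_A\to\Spec A$, hence a flat extension of $\delta_{X_\eta}$ whose intersection with the reduced special fibre is exactly $\delta_{X_0}$ with multiplicity one.

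The gap is in the passage to algebraic closures. You assert that Fulton's specialization is compatible with flat pullback along extensions of DVRs and that ramification ``does not affect the existence of the limit.'' This is precisely where the naive argument breaks: if $A'/A$ has ramification index $e$, then the Cartier divisor $V(\pi_A)$ pulls back to $e\cdot V(\pi_{A'})$, so the level-$A'$ specialization (intersection with the \emph{reduced} special fibre) satisfies $(\specialization_A\alpha)_{L'}=e\cdot \specialization_{A'}(\alpha_{F'})$. For $e>1$ the finite-level maps therefore do \emph{not} form a directed system; a nontrivial multiplicity factor is exactly the obstruction to the colimit, not something that can be absorbed without comment. The standard fix is to make a definite choice: define the level-$F'$ map by intersecting with the pullback of $V(\pi_A)$ from the bottom level (i.e.\ $e$ times the reduced fibre), so that compatibility holds by construction; property (1) is unaffected, and property (2) survives because $\delta_{X_\eta}$ is already defined over $F(X_\eta)$, where $e=1$. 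A second, smaller imprecision: to guarantee that the residue fields of the $A'$ embed into $\overline{L}(Y_0)$ — the function field of $Y_0\times_L\overline{L}$, not the abstract algebraic closure of $L(Y_0)$ — you must take the cofinal system of subextensions of the form $F''(Y_\eta)$ for $F''/F$ finite and set $A'=\mathcal{O}_{\mathcal{Y}\times_B B'',\,Y_0\times_L L''}$ for an extension of DVRs $B''/B$; an arbitrary Krull–Akizuki extension of $A$ inside a finite extension of $F(Y_\eta)$ need not have residue field contained in $\overline{L}(Y_0)$.
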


    The first item of the lemma ensures that the specialization map $\specialization$ commutes with the obstruction map $\Phi_{\Tilde{\mathcal{X}}}$, i.e. to understand specializations of $\Phi_{\Tilde{\mathcal{X}}}$ it suffices to understand the specializations of the involved varieties. To distinguish the varieties from their specializations we denote by subscripts the transcendental parameters (i.e. $\alpha,\beta,\gamma$) on which the variety depends, e.g. $Z = Z_{\alpha,\beta}$. We omit a parameter after specializing it to zero and denote the scheme obtained after specializing all transcendental parameters $\alpha,\ \beta,$ and $\gamma$ to $0$ with subscript $0$.
    
    In the remainder of this section we prove the following result.
    \begin{proposition}\label{prop:KeyReduction}
        Let $\Phi_{k(P_W)}$ denote the obstruction map $\Phi_{\Tilde{\mathcal{X}},P_W}$ modulo $2$ where every scheme is base-changed to the function field $k(P_W)$ of $P_W$, i.e.
        $$
            \Phi_{k(P_W)} \colon \CH_1(\Tilde{X}_{k} \times_k k(P_W)) \longrightarrow \CH_0(P_W \times_k k(P_W)) \mod 2.
        $$
        Then the image of $\specialization_\gamma \circ \specialization_\beta \circ \specialization_\alpha \circ \Phi_{k(P_W)}$ is contained in the image of $$
            \CH_0(P_{W_0})/2 \longrightarrow \CH_0(P_{W_0} \times_{k_0} k_0(P_{W_0}))/2
        $$
        where $\specialization_i$ is the specialization obtained by sending $i \to 0$. Moreover, the diagonal point $\delta_{P_W} \in \CH_0(P_W \times_k k(P_W))$ is sent to the diagonal point $\delta_{P_{W_0}} \in \CH_0(P_{W_0} \times_{k_0} k_0(P_{W_0}))$ under these specializations.
    \end{proposition}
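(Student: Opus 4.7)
The preservation of the diagonal is immediate from Lemma 4.4(2): applying that item once for each of the three specializations $\specialization_\alpha, \specialization_\beta, \specialization_\gamma$ sends $\delta_{P_W}$ to $\delta_{P_{W_0}}$.

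For the image claim, my first move would be to invoke \Cref{rem:ReductionOfObstructionMapBeforeSpecialization}, which already forces the image of $\Phi_{k(P_W)}$ into the image of
\[
    \CH_1(Y \times_k L) \oplus \CH_0(S \times_k L) \oplus \CH_1(Z \times_k L) \longrightarrow \CH_0(P_W \times_k L)/2,
\]
where $L = k(P_W)$. The obstruction map $\Phi_{\tilde{\mathcal{X}},P_W}$ is built from the pushforward along the inclusion of the special fibre followed by intersection against a smooth Cartier divisor, while the reduction to the three summands above uses pushforwards along closed immersions, pullbacks along the projective bundle $P_W \to W$, and the blow-up formula for $\tilde{Y} = \blowup_S Y$. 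Each of these operations is compatible with Fulton's specialization by \Cref{lem:FultonSpecialization}(1). Consequently, after applying $\specialization_\gamma \circ \specialization_\beta \circ \specialization_\alpha$ the resulting image lies in the image of the analogous map for the specialized varieties $Y_0, S_0, Z_0$, base-changed to $L' := k_0(P_{W_0})$.

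The second step is to exploit that at $\alpha=\beta=\gamma=0$ the polynomials $c_0, q_0$ do not involve $x_7$ and $f_{0,0} = x_6^3$, forcing extremely rigid geometry. Most crucially, $Z_0 = \{c_0 = q_0 = 0\} \subset \projspace^7_{k_0}$ is a projective cone over $W_0 \subset \projspace^6_{k_0}$ with apex $v = [0{:}\cdots{:}0{:}1]$; the localization sequence together with the $\mathbb{A}^1$-bundle structure on $Z_0 \setminus \{v\} \to W_0$ then yields $\CH_1(Z_0 \times_{k_0} L') \cong \CH_0(W_0 \times_{k_0} L')$, so that $1$-cycles on $Z_0$ are, modulo rational equivalence, lines through $v$ parameterised by $0$-cycles of $W_0 \times L'$. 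Analogous cone/linear-ambient arguments control $Y_0$ (contained in the hyperplane $\{x_7=0\}$, hence a cubic hypersurface in $\projspace^6_{k_0}$) and $S_0$ (forced by $f_{0,0} = x_6^3$ into the codimension-$2$ linear subspace $\{x_6 = x_7 = 0\}$), reducing both remaining summands to data supported on $W_0 \times L'$ or lower-dimensional strata.

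In the final step I would trace each such generator through the explicit formula \eqref{eq:ExplicitDescriptionOfObstructionMap}, using that the two intersections $P_{W_0} \cap \tilde{Y}_0$ and $P_{W_0} \cap Z_0$ are disjoint sections of the $\projspace^1$-bundle $P_{W_0} \to W_0$ (\Cref{lem:TildeCurlyX}). A line through $v$ over a point $w \in W_0(L')$ meets $Z \cap P_{W_0}$ in a single $L'$-point above $w$, and this section together with the disjointness of the two sections allows one to rewrite the image, modulo $2$, as an element pulled back from $\CH_0(P_{W_0})$; contributions from $Y_0$ and $S_0$ are handled similarly using the linear-ambient structure. The main obstacle is precisely this step: confirming that every base-changed cycle arising from the three summands reduces modulo $2$ to a class pulled back from $\CH_0(P_{W_0})$. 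The $2$-divisibility hinges on the two disjoint sections structure (following the template of \cite[Theorem~4.1]{PS23} and the discussion preceding \Cref{rem:ReductionOfObstructionMapBeforeSpecialization}): each ``variable" contribution to $\CH_0(P_{W_0} \times L')$ appears with even multiplicity via the two intersection terms, so it is annihilated mod $2$, leaving only classes coming from $\CH_0(P_{W_0})$.
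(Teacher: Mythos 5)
Your opening moves are fine: the diagonal claim does follow from \Cref{lem:FultonSpecialization}(2) (though you should also verify, as the paper does at each step, that $W$ and hence $P_W$ stays integral so that the lemma applies), and the reduction to the three summands $\CH_1(Y)\oplus\CH_0(S)\oplus\CH_1(Z)$ together with compatibility of Fulton specialization is exactly \Cref{rem:ReductionOfObstructionMapBeforeSpecialization}. The serious gap is that you then specialize all three parameters to $0$ \emph{first} and only afterwards try to control $\CH_1(Z_0)$. At $\alpha=\beta=0$ the quadric $q_0=x_3x_6-x_4x_5$ no longer involves $x_7$, so, exactly as you observe, $Z_0$ is the cone over $W_0$ with vertex $v=[0{:}\dots{:}0{:}1]$ and $\CH_1(Z_0\times L')\cong\CH_0(W_0\times L')$, generated by the lines $\ell_w$ through $v$. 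But under \eqref{eq:ExplicitDescriptionOfObstructionMap} the contribution of $(\iota_Z)_*\ell_w$ to $\CH_0(P_{W_0}\times L')$ is (up to sign) the class of the single point $w$ on the section $Z_0\cap P_{W_0}\cong W_0$: a $1$-cycle supported on $Z$ meets only the one section $Z\cap P_W$, not both, so no factor of $2$ appears and nothing cancels modulo $2$. The image of $\CH_1(Z_0\times L')$ therefore already contains the class of an \emph{arbitrary} closed point of $W_{0,L'}$ --- in particular (via the $\projspace^1$-bundle identification) the diagonal point itself --- so the containment you would derive is vacuous and cannot separate $\delta_{P_{W_0}}$ from the image of $\CH_0(P_{W_0})/2$. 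The ``two disjoint sections give even multiplicity'' cancellation applies only to the fibre classes of $P_W\to W$, i.e.\ to the $\CH_0(W)$-summand of $\CH_1(P_W)$, which is precisely how it is used before \Cref{rem:ReductionOfObstructionMapBeforeSpecialization}; it does not apply to cycles supported on $Z$ or $\tilde{Y}$.

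This is exactly why the paper interleaves the Chow-group reductions with the specializations instead of specializing first. At the stage $\alpha=0$, $\beta\neq 0$, the coefficient of $x_7$ in $q_{\beta,\gamma}$ is $\beta x_3\neq 0$, so projection from the vertex is \emph{birational} and \Cref{lem:SimplificationCH1} gives a surjection $\CH_1(Y_{\beta,\gamma}\times L_1)\oplus\CH_0(V_{\beta,\gamma}\times L_1)\twoheadrightarrow\CH_1(Z_{\beta,\gamma}\times L_1)$, trading the $Z$-contribution for a $Y$-contribution plus a $\CH_0$ of an auxiliary variety $V$. The step $\beta\to 0$ similarly reduces $\CH_1(Y_\gamma)$ to $\Z\oplus\CH_0(U_\gamma)$ using the ordinary double point, and only the last step $\gamma\to 0$ degenerates to cones, at which point the only remaining variable inputs are $\CH_0(U_0)$, $\CH_0(V_0)$, $\CH_0(S_0)$; these must be shown universally $\CH_0$-trivial by the explicit component-by-component arguments of \Cref{lem:CH_0(U)} and \Cref{lem:CH_0(V)} (containment of $S_0$ in a linear subspace, which is what you invoke, does not by itself give triviality). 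Your argument would need to be restructured along these lines to close the gap.
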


    We sketch the proof first: By \Cref{rem:ReductionOfObstructionMapBeforeSpecialization} the image of $\Phi_{k(P_W)}$ is contained in $$
        \Ima \left(\CH_1(Y \times_k k(P_W)) \oplus \CH_0(S \times_k k(P_W)) \oplus \CH_1(Z \times_k k(P_W)) \longrightarrow \CH_0(P_W \times_k k(P_W))\right) \mod 2.
    $$
    The specialization $\alpha,\beta,\gamma \to 0$ enables us to simplify the Chow groups as follows: We first specialize $Z$ to a singular variety which is birational to $Y$. This allows us to write $\CH_1(Z)$ as $\CH_1(Y)$ and some $\CH_0$. In the second step $Y$ becomes rational, i.e. $\CH_1(Y)$ is equal to $\CH_1(\projspace^5) \cong \Z$ plus some $\CH_0$. Lastly the remaining schemes, i.e. $S$ and the two introduced in the previous two steps, specialize to some schemes with universally trivial $\CH_0$. The diagram below visualizes this strategy in an informal way:
    \begin{equation}
        \label{eq:strategydiagram}
        \begin{tikzcd}[decoration={snake,amplitude = 1pt,segment length=5pt},
        column sep=large, row sep=0pt,
        /tikz/column 1/.append style={anchor=base east},
        /tikz/column 2/.append style={anchor=base west}
        ]
            Z \arrow[r,decorate,"\alpha \to 0"] & Y \ + \text{ something}, \\
            Y \arrow[r,decorate,"\beta \to 0"] & \projspace^5 \ + \text{ something}, \\
            S \  + \text{ something} \arrow[r,decorate,"\gamma \to 0"] & \text{something with universally trivial } \CH_0.
        \end{tikzcd}
    \end{equation}

    The following lemma allows us to make the above described simplification. Before stating the lemma we describe quickly the geometric picture. Consider a variety $Z \subset \projspace^n$ which is given as the scheme-theoretic intersection of degree $d$ hypersurface $H$ with a cone over a smooth variety $Y \subset \projspace^{n-1}$ with a $k$-rational point $Q$ as vertex such that the hypersurface $H$ intersects $Q$ with multiplicity $d-1$. Thus the projection from $Q$ yields a birational map $Z \dashrightarrow Y$. By resolving the birational map we can describe $\CH_1(Z)$ in terms of $\CH_1(Y)$ and $\CH_0$ of the exceptional locus of the map.

    \begin{lemma}\label{lem:SimplificationCH1}
        Let $Y := \{F_1 = \dots = F_r = 0\} \subset \projspace^n_\kappa$ be a smooth variety over a field $\kappa$ where $F_1,\dots,F_r \in \kappa[x_0,\dots,x_n]$ are homogeneous polynomials and $r \geq 0$. Let $$
            Z := \{F_1 = \dots = F_r = g_1 x_{n+1} + g_0 = 0\} \subset \projspace^{n+1}_\kappa
        $$
        where $g_i \in k[x_0,\dots,x_n]$ are homogeneous polynomials of degree $d - i$. Assume further that $$
            W := \{F_1 = \dots = F_r = g_1 = g_0 = 0\} \subset \projspace^n_\kappa
        $$
        is smooth. Then for any field extension $\kappa' / \kappa$ there is a surjective homomorphism
        $$
            \CH_0(W \times_\kappa \kappa') \oplus \CH_1(Y \times_\kappa \kappa') \relbar\joinrel\twoheadrightarrow \CH_1(Z \times_\kappa \kappa').
        $$
    \end{lemma}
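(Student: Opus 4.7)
My plan is to realize $Z_{\kappa'}$ as the image of a proper birational morphism from the blow-up $\widetilde{Y}_{\kappa'} := \blowup_{W_{\kappa'}} Y_{\kappa'}$ and then invoke Fulton's blow-up formula. Since every step is stable under flat base change $\kappa \to \kappa'$, I work over $\kappa$ and suppress the subscript throughout.

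The geometric input is projection from the point $Q := [0:\dots:0:1] \in Z$ (it lies in $Z$ because the defining equations, being homogeneous of positive degree in $x_0,\dots,x_n$, all vanish there). Its birational inverse is the map $\sigma : Y \dashrightarrow Z$ defined on $Y \setminus W$ by
\[
    p \longmapsto [g_1(p) p_0 : g_1(p) p_1 : \dots : g_1(p) p_n : -g_0(p)],
\]
whose image lies in $Z$ by a direct check using the homogeneity of the $F_i, g_0, g_1$. I extend $\sigma$ across $W$ by passing to $\widetilde{Y}$: in local coordinates $[u_0 : u_1]$ on the exceptional $\projspace^1$-bundle $E \to W$ (subject to the blow-up relation $u_0 g_1 = u_1 g_0$), the morphism $\widetilde{\sigma} : \widetilde{Y} \to Z$ is given by $([p],[u_0:u_1]) \mapsto [u_1 p_0 : \dots : u_1 p_n : -u_0]$. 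Once this is established globally as a morphism, $\widetilde{\sigma}$ is automatically proper (as $\widetilde{Y}$ is projective) and birational, since the blow-down restricts to an isomorphism $\widetilde{Y} \setminus E \xrightarrow{\sim} Y \setminus W$ on which $\widetilde{\sigma}$ agrees with $\sigma$.

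Any irreducible curve in $Z$ is not contracted by $\widetilde{\sigma}$ (the exceptional locus lies over the point $Q$), so taking strict transforms shows that $\widetilde{\sigma}_* : \CH_1(\widetilde{Y}) \twoheadrightarrow \CH_1(Z)$ is surjective. To compute the source I apply Fulton's blow-up formula \cite[Proposition 6.7(e)]{Ful98}: the inclusion $W \hookrightarrow Y$ is a regular embedding of codimension $2$ (the smooth $W$ is cut out in the smooth $Y$ by $g_0 = g_1 = 0$), yielding
\[
    \CH_1(\widetilde{Y}) \cong \pi^* \CH_1(Y) \oplus j_* p^* \CH_0(W),
\]
where $\pi : \widetilde{Y} \to Y$ is the blow-down, $j : E \hookrightarrow \widetilde{Y}$ the exceptional divisor, and $p : E \to W$ the $\projspace^1$-bundle structure. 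Composing with $\widetilde{\sigma}_*$ gives the desired surjection $\CH_1(Y) \oplus \CH_0(W) \twoheadrightarrow \CH_1(Z)$, which base-changes verbatim to $\kappa'$.

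The main technical obstacle is the globally correct construction of $\widetilde{\sigma}$: since $g_0$ and $g_1$ are sections of line bundles of different degrees on $Y$, the ratio $[g_0 : g_1]$ does not literally define a rational map to $\projspace^1$, so the formulas above must be interpreted with an appropriate line-bundle twist (or by patching in affine charts in which $g_0, g_1$ are honest functions). A clean alternative that avoids this bookkeeping is to define $\widetilde{Z}$ directly as the strict transform of $Z$ in $\blowup_Q \projspace^{n+1}$, verify by a fiberwise analysis of its two natural projections that $\widetilde{Z} \cong \blowup_W Y$ and that the projection to $Z$ is proper birational, and then argue as above.
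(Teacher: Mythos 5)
Your proposal is correct and follows essentially the same route as the paper: both resolve the projection from the vertex $Q=[0:\dots:0:1]$ via the identification $\blowup_W Y \cong \blowup_Q Z$ (the paper verifies this by the same affine-chart computation you sketch as your ``clean alternative''), then combine the blow-up formula for $\CH_1(\blowup_W Y)$ with surjectivity of the proper pushforward to $Z$, all of which is stable under base change to $\kappa'$. The line-bundle-twist subtlety you flag is exactly what the paper's chart-by-chart argument handles.
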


    \begin{remark}
        Our assumption on $W$ and $Y$ implies that $Z$ is smooth away from $Q$.
    \end{remark}

    \begin{proof}
        We note first that $Z$ is the scheme-theoretic intersection of the cone $C_Y$ over $Y \subset \projspace^n_\kappa \cong \{x_{n+1} = 0\} \subset \projspace^{n+1}_\kappa$ with vertex $Q = [0:\dots:0:1] \in \projspace^{n+1}_\kappa$ and the degree $d$ hypersurfaces $H := \{g_1 x_{n+1} + g_0 = 0\} \subset \projspace^{n+1}_\kappa$. Since $C_Y$ is a cone with vertex $Q$, the projection from $Q$ induces a rational map
        $$
            \varphi \colon C_Y \dashrightarrow Y.
        $$
        As $H$ has multiplicity $d-1$ at $Q$, the restriction of $\varphi$ to $Z = C_Y \cap H$ is a birational map $\restr{\varphi}_Z \colon Z \dashrightarrow Y$. Indeed the fibre of $\varphi$ over some $\kappa$-rational point $P \in Y$, different from $Q$, is the unique line through $P$ and $Q$. Since $H$ has multiplicity $d-1$ at $Q$, $H$ intersects this line in a unique other point which is mapped to $P$ under $\varphi$. An explicit computation in affine charts yields that the map $\restr{\varphi}_Z$ is resolved by the isomorphism
        $$
            \blowup_Q Z \overset{\cong}{\longrightarrow} \blowup_W Y.
        $$
        We sketch the computation for the convenience of the reader: Recall, $$
            Q = \{x_0 = \dots = x_n = 0\} \in Z \subset \projspace^{n+1},
        $$
        i.e. the $x_i$-chart of the blow-up $\blowup_Q Z$ is given by
            $$
                \{x_j - X_j x_i = F_1(X) = \dots = F_r(X) = g_1(X) x_{n+1} + x_i g_0(X) = 0, \quad \text{for } j \neq i,n+1\}.
            $$
            On the other hand, we consider the standard affine charts $U_i \subset \projspace^n$ with affine coordinates $Y_j = \frac{y_j}{y_i}$ for $j \neq i$. The blow-up of $U_i$ along $W \cap U_i$ is given by
            $$
                \{F_1(Y) = \dots = F_r(Y) = g_0(Y) T - S g_1(Y) = 0\}.
            $$
            From this description, we immediately see that $\blowup_Q Z$ and $\blowup_W Y$ are locally isomorphic and thus isomorphic, as they are birational. Note moreover, the isomorphism is induced by the birational map $\restr{\varphi}_Z$.
            
        Hence, there is an isomorphism on the level of Chow groups $\CH_1(\blowup_W Y) \cong \CH_1(\blowup_Q Z)$. Since $W$ and $Y$ are smooth by assumption, the blow-up formula for Chow groups (see e.g. \cite[Theorem 3.3 and Proposition 6.7 (e)]{Ful98}) yields
        $$
            \CH_0(W) \oplus \CH_1(Y) \cong \CH_1(\blowup_W Y) \cong \CH_1(\blowup_Q Z).
        $$
        Moreover, the natural pushforward $$
            \pi_\ast \colon \CH_1(\blowup_Q Z) \longrightarrow \CH_1(Z)
        $$
        of the proper morphism $\pi \colon \blowup_Q Z \to Z$ is surjective, because $Q$ is a point.

        Since blow-ups commute with extension of the base field, the above construction also works after any base extension, i.e. we obtain a surjective homomorphism
        $$
            \CH_0(W \times_\kappa \kappa') \oplus \CH_1(Y \times_\kappa \kappa') \relbar\joinrel\twoheadrightarrow \CH_1(Z \times_\kappa \kappa'),
        $$
        concluding the proof.
    \end{proof}

    \begin{remark}
        A typical use of \Cref{lem:SimplificationCH1} involves $Y$ to simply be some projective space, so $\CH_1(Y) \simeq \Z$. In this case we have an alternative viewpoint to \Cref{lem:SimplificationCH1} through the localization exact sequence of Chow groups. 
        With notation as in the lemma, let $C_W$ be the cone over $W$, and define $U \coloneqq Y \setminus W$. Since the birational map $\varphi$ is an isomorphism away from $C_W$ and $W$ respectively, we have short exact sequences, valid over any field extension:
        \begin{align*}
            \CH_1(C_W) \longrightarrow \CH_1(Z) \longrightarrow \CH_1(U) \longrightarrow 0 \\
            \CH_1(W) \longrightarrow \CH_1(Y) \longrightarrow \CH_1(U) \longrightarrow 0
        \end{align*}
        Assume $Y=\projspace^n$ and $W$ contains a line. Then it follows from the bottom sequence that $\CH_0(U)$ is trivial, hence $\CH_1(C_W) \to \CH_1(Z)$ is surjective.  Since $C_W$ is a cone over $W$, the complement of the vertex is an affine bundle over $W$. It furthermore follows from the formula for $\CH_1$ of an affine bundle (\cite[Proposition 1.9]{Ful98}) that there is an isomorphism $\CH_1(C_W) \simeq \CH_0(W)$, valid over any field extension. In total, this approach gives a surjection $\CH_0(W) \to \CH_1(Z)$, valid over any field extension, just as \Cref{lem:SimplificationCH1}.        
    \end{remark}

    \begin{proof}[Proof of \Cref{prop:KeyReduction}]
        We aim to prove that the image of the base-changed obstruction map
        $$
            \Phi_{k(P_W)} \colon \CH_1\left(\left(X_{k}\right)_{\alpha,\beta,\gamma} \times_k k\left(P_{W_{\alpha,\beta,\gamma}}\right)\right) \longrightarrow \CH_0\left(P_{W_{\alpha,\beta,\gamma}} \times_k k\left(P_{W_{\alpha,\beta,\gamma}}\right)\right) \mod 2
        $$
        is contained in the image of \begin{equation}\label{eq:ImageOfPhiAfterAllSpecializations}
            \CH_0\left(P_{W_0}\right) \longrightarrow \CH_0\left(P_{W_0} \times_{k_0} k_0\left(P_{W_0}\right)\right) \mod 2,
        \end{equation}
        after specializing $\alpha, \ \beta, $ and $\gamma$ to $0$. Recall that we noticed in \Cref{rem:ReductionOfObstructionMapBeforeSpecialization} that the image of $\Phi_{k(P_W)}$ is contained in the image of the homomorphism
        \begin{equation}\label{eq:KeyReductionStep0}
            \CH_1(Y_{\alpha,\beta,\gamma} \times L_0)) \oplus \CH_0(S_{\alpha,\beta,\gamma} \times L_0) \oplus \CH_1(Z_{\alpha,\beta,\gamma} \times L_0) \longrightarrow \CH_0(P_{W_{\alpha,\beta,\gamma}} \times L_0) \mod 2,
        \end{equation}
        where $L_0 := k(P_{W_{\alpha,\beta,\gamma}})$ is the function field of the $\projspace^1$-bundle $P_{W_{\alpha,\beta,\gamma}}$. Thus we need to show that the image of \eqref{eq:KeyReductionStep0} is contained in \eqref{eq:ImageOfPhiAfterAllSpecializations} after applying $\specialization_\gamma \circ \specialization_\beta \circ \specialization_\alpha$. This is done by analyzing each specialization in the following three steps. In order to apply Fulton's specialization map, i.e. \Cref{lem:FultonSpecialization}, in each step, we need to check that $P_W$ remains integral after specialization. Since $P_W$ is a $\projspace^1$-bundle over $W$, it suffices to check that $W$ remains integral after each specialization.

        \textit{Step 1.} We specialize $\alpha \to 0$ to control $\CH_1(Z)$. Note that $c_{\alpha,\beta,\gamma}$ and $q_{\alpha,\beta,\gamma}$ specialize to
        \begin{align}
        \begin{split}\label{eq:SpecializedEquationsInStep1}
            c_{\beta,\gamma} &= c_0 + \gamma (x_6 p_3 + c_3) + \beta c_2 \in \overline{k_0(\beta,\gamma)}[x_0,\dots,x_6], \\
            q_{\beta,\gamma} &= (\beta x_3) \cdot x_7 + q_0 + \beta q_2 \in \overline{k_0(\beta,\gamma)}[x_0,\dots,x_7],
        \end{split}
        \end{align}
        respectively. Hence we find that $W_{\beta,\gamma} = \{c_{\beta,\gamma} = q_{\beta,\gamma} = x_7 = 0\} \subset \projspace^7$ is smooth and in particular integral, because it specializes to the smooth (2,3) fourfold $\{c_2 = q_2 = 0\} \subset \projspace^6$. Thus we can apply \Cref{lem:FultonSpecialization}. Moreover, we see from \eqref{eq:SpecializedEquationsInStep1} that $$
            Z_{\beta,\gamma} = \left\{(\beta x_3) \cdot x_7 + \left(q_0 + \beta q_2\right) = 0\right\} \cap C_{Y_{\beta,\gamma}} \subset \projspace^7,
        $$
        where $C_{Y_{\beta,\gamma}} \subset \projspace^7$ denotes the cone over $Y_{\beta,\gamma} \subset \projspace^6$ with vertex $[0:\dots:0:1]$, i.e. $Z_{\beta,\gamma}$ is of the form from \Cref{lem:SimplificationCH1}. It is immediate to check that $Y_{\beta,\gamma}$ and $V_{\beta,\gamma} := \{x_3 = q_0 + \beta q_2 = 0\} \subset \projspace^6$ are smooth. Thus, \Cref{lem:SimplificationCH1} yields a surjection
        $$
            \CH_1(Y_{\beta,\gamma} \times_\kappa L_1) \oplus \CH_0(V_{\beta,\gamma} \times_\kappa L_1) \relbar\joinrel\twoheadrightarrow \CH_1(Z_{\beta,\gamma} \times_\kappa L_1),
        $$
        where $\kappa := \overline{k_0(\beta,\gamma)}$ and $L_1 := \kappa(P_{W_{\beta,\gamma}})$. Hence, by applying \Cref{lem:FultonSpecialization} we find that the image of $\specialization_\alpha$ applied to \eqref{eq:KeyReductionStep0} is contained in the image of
        \begin{equation}\label{eq:KeyReductionStep1}
            \CH_1(Y_{\beta,\gamma} \times_\kappa L_1) \oplus \CH_0(S_{\beta,\gamma} \times_\kappa L_1) \oplus \CH_0(V_{\beta,\gamma} \times_\kappa L_1) \longrightarrow \CH_0(P_{W_{\beta,\gamma}} \times_\kappa L_1) \mod 2.
        \end{equation}

        \textit{Step 2.} $Y$ specializes to a cubic hypersurface with an ordinary double point via $\beta \to 0$, so $Y$ becomes rational. Note that after applying $\beta \to 0$, $c_{\beta,\gamma}$ and $q_{\beta,\gamma}$ become 
        \begin{align}
        \begin{split}\label{eq:SpecializedEquationsInStep2}
            c_{\gamma} &= (x_2^2 - 2x_3^2 + \gamma p_3) \cdot x_6 + x_0^2 x_5 + x_1^2 x_4 + x_3(x_3^2 + x_4^2 + x_5^2 - 2x_3(x_4+ x_5)) + \gamma c_3, \\
            q_{\gamma} &= q_0 = x_3x_6 - x_4 x_5,
        \end{split}
        \end{align}
        in $\overline{k_0(\gamma)}[x_0,\dots,x_6]$. We see that $W_{\gamma} = \{c_\gamma = q_\gamma = 0\} \subset \projspace^6$ is integral, i.e. we can apply \Cref{lem:FultonSpecialization}. Furthermore, we see from \eqref{eq:SpecializedEquationsInStep2} that $$
            Y_{\gamma} = \{c_{\gamma} = 0\} \subset \projspace^6
        $$
        has an ordinary double point singularity at $[0:\dots:0:1]$, i.e. $Y_\gamma$ is rational. We aim to apply \Cref{lem:SimplificationCH1} again. Note that
        $$
            Y_\gamma = \left\{(\gamma p_3) \cdot x_6 + \left(c_0 + \gamma c_3\right) = 0\right\} \cap \projspace^6 = \left\{(\gamma p_3) \cdot x_6 + \left(c_0 + \gamma c_3\right) = 0\right\} \cap C_{\projspace^5} \subset \projspace^6
        $$
        where we view $C_{\projspace^5} = \projspace^6$ as the cone over the hyperplane $\{x_6 = 0\}$ with vertex $[0:\dots:0:1]$. It is immediate that $\projspace^5$ and $U_{\gamma} := \{p_3 = c_0 + \gamma c_3 = 0\} \subset \projspace^5$ are smooth. Thus, \Cref{lem:SimplificationCH1} yields a surjection
        $$
            \Z \oplus \CH_0(U_\gamma \times_\kappa L_2) \cong \CH_1(\projspace^5_{L_2}) \oplus \CH_0(U_\gamma \times_\kappa L_2) \relbar\joinrel\twoheadrightarrow \CH_1(Y_\gamma \times_\kappa L_2),
        $$
        where $\kappa = \overline{k_0(\gamma)}$ and $L_2 = \kappa(P_{W_\gamma})$. Moreover, we used that $\CH_1$ of projective space is generated by a line, i.e. isomorphic to $\Z$. Since the line can be choosen to be defined over $\kappa$, we find that the image of the map $\CH_1(\projspace_{L_2}) \to \CH_0(P_{W_\gamma} \times_\kappa L_2)$ is contained in the image of the map $\CH_0(P_{W_\gamma}) \to \CH_0(P_{W_\gamma} \times_\kappa L_2)$. Hence, we find that the image of $\specialization_\beta \circ \specialization_\alpha$ applied to \eqref{eq:KeyReductionStep0} (or the image of $\specialization_\beta$ applied to \eqref{eq:KeyReductionStep1}) is contained in the image of
        \begin{small}
        \begin{equation}\label{eq:KeyReductionStep2}
            \CH_0(P_{W_\gamma}) \oplus \CH_0(U_\gamma \times_\kappa L_2) \oplus \CH_0(V_\gamma \times_\kappa L_2) \oplus \CH_0(S_\gamma \times_\kappa L_2) \longrightarrow \CH_0(P_{W_\gamma} \times_\kappa L_2) \mod 2.
        \end{equation}
        \end{small}

        \textit{Step 3.} After specializing $\gamma \to 0$, $U$, $V$, and $S$ become universally $\CH_0$-trivial. 
        
        Note that $W_\gamma$ specializes to
        $$
            W_0 = \{c_0 = q_0 = 0\} \subset \projspace^6
        $$
        which is integral, i.e. we can apply \Cref{lem:FultonSpecialization}. The schemes $U_\gamma, \ V_\gamma,$ and $S_\gamma$ specialize to
        \begin{align*}
            U_0 &= \{x_2^2 - 2x_3^2 = x_0^2 x_5 + x_1^2 x_4 + x_3(x_3^2 + x_4^2 + x_5^2 - 2x_3(x_4+ x_5)) = 0 \} \subset \projspace^5_{k_0},\\
            V_0 &= \{x_3 = x_4x_5 = c_0 = 0\} \subset \projspace^6_{k_0},\\
            S_0 &= \{x_6^3 = x_3x_6 - x_4x_5 = c_0 = 0 \} \subset \projspace^6_{k_0}.
        \end{align*}
        We claim that they all have universally trivial $\CH_0$. Then for each of these schemes the image of $$
            \CH_0(\cdot \times_{k_0} k_0(P_{W_0})) \to \CH_0(P_{W_0} \times_{k_0} k_0(P_{W_0}))
        $$ is contained in the image of the homomorphism $$
            \CH_0(P_{W_0}) \to \CH_0(P_{W_0} \times_{k_0} k_0(P_{W_0})).
        $$
        In particular we find that the image of $\specialization_\gamma \circ \specialization_\beta \circ \specialization_\alpha \circ \Phi_{k(P_W)}$ is contained \eqref{eq:ImageOfPhiAfterAllSpecializations}. The claim follows immediately from \Cref{lem:CH_0(U)} and \Cref{lem:CH_0(V)} below and thus the proposition holds.
    \end{proof}
    
    It remains to check that the $\CH_0$-groups of $S_0$, $U_0$ and $V_0$ are universally trivial. To do this we will apply the following two results from \cite{CTPCyclic}.
    \begin{lemma}[{\cite[Lemma 2.2]{CTPCyclic}}]
	\label{lem:RationalCH0Trivial}
	Let $k$ be an algebraically closed field and $X$ an integral projective $k$-rational variety. If $X$ is smooth on the complement of a finite number of closed points, then $\CH_0(X)$ is universally trivial.
\end{lemma}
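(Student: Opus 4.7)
The plan is to reduce to the smooth case by resolving the finitely many singular points, invoke the universal $\CH_0$-triviality of smooth projective $k$-rational varieties, and then descend via the pushforward of the resolution.

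First I would pick a proper birational morphism $\pi \colon \tilde{X} \to X$ with $\tilde{X}$ smooth and projective. Since $\Sing(X) = \{p_1, \ldots, p_r\}$ is a finite set of closed points, each $k$-rational because $k$ is algebraically closed, such a resolution exists by Hironaka in characteristic $0$ (and for the low-dimensional applications in the remainder of the paper by explicit resolution in arbitrary characteristic). By construction $\pi$ is an isomorphism over $U := X \setminus \Sing(X)$.

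Next, I would argue that $\tilde{X}$ has universally trivial $\CH_0$. Indeed, $\tilde{X}$ is smooth, projective and $k$-rational (being birational to $X$), hence retract rational, so by the lemma in \Cref{sec:Decomposition} it admits a decomposition of the diagonal. For smooth proper varieties this is equivalent to universal $\CH_0$-triviality by \cite[Proposition 1.4]{CTP16}. Thus $\CH_0(\tilde{X}_F) \cong \Z$ for every field extension $F/k$.

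The heart of the proof is to show that $\pi_{F\ast} \colon \CH_0(\tilde{X}_F) \to \CH_0(X_F)$ is surjective for every $F/k$. A closed point $x \in X_F$ either maps under $X_F \to X$ into $U$, in which case $\pi_F$ is an isomorphism at $x$ and $x$ lifts uniquely to $\tilde{X}_F$ with the same residue field; or its image in $X$ is one of the $p_i$, and then $x = (p_i)_F$ is $F$-rational since $\kappa(p_i) = k$. In the latter case the exceptional fiber $E_i := \pi^{-1}(p_i)$ is a non-empty projective $k$-variety, so by algebraic closedness of $k$ it carries a $k$-rational point whose base change to $F$ provides an $F$-rational preimage of $x$. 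Hence every closed-point generator of $\CH_0(X_F)$ lies in the image of $\pi_{F\ast}$, so $\CH_0(X_F)$ is a quotient of $\Z$; combined with the existence of a $k$-rational point on $X$ this forces the degree map to be an isomorphism. The main obstacle is securing the resolution in the first step outside characteristic zero, and this is precisely where the hypothesis that the singular locus is zero-dimensional is essential.
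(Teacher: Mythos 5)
The paper itself offers no proof of this lemma; it is imported wholesale from \cite{CTPCyclic}, so there is nothing internal to compare you against. Your descent step is correct and complete: over the smooth locus $\pi_F$ is an isomorphism, so points lift with the same residue field; the finitely many singular points of $X_F$ are $F$-rational because their residue fields on $X$ equal $k$; and each exceptional fibre is a non-empty projective $k$-scheme, hence has a $k$-point whose base change pushes forward to the given $F$-rational point. This does give surjectivity of $\pi_{F\ast}$ on cycles, and together with $\CH_0(\tilde{X}_F)\cong\Z$ the degree map on $\CH_0(X_F)$ is forced to be an isomorphism. (A small remark: the finiteness of the singular locus is essential not only for the resolution but also for this surjectivity step — for a positive-dimensional singular locus, a closed point need not admit a preimage of the same residue field, and pushforward would only produce multiples of $[x]$.)

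The genuine gap is the very first step. The lemma carries no restriction on $\charac k$, and this paper applies it (via \Cref{lem:CH_0(U)}) over algebraically closed fields of positive characteristic, including characteristic $3$, where Hironaka is unavailable; resolution of even isolated singularities is not known in general in positive characteristic beyond dimension $3$. Your fallback of an ``explicit resolution for the low-dimensional applications'' proves a weaker statement than the lemma, and the characteristic-free substitute the paper uses elsewhere — Gabber's odd-degree alterations — cannot repair your argument, since pushforward along a degree-$d$ alteration only places sums of preimages with multiplicities adding up to $d$ in the image, so surjectivity of the pushforward is lost. A resolution-free argument (which is what makes the statement valid in all characteristics, and is the route of the cited reference) first moves any zero-cycle off the finite singular set, using integral curves through the singular points and large ample linear systems on their normalizations, and then handles zero-cycles supported on the smooth locus via the closure of the graph of a birational map $\projspace^n_F\dashrightarrow X_F$, where the relevant refined Gysin classes exist because the points in question are regular points of $X_F$. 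As written, your proof establishes the lemma in characteristic $0$, and in general only under the additional hypothesis that $X$ admits a resolution which is an isomorphism over the smooth locus — which does happen to cover the threefolds with ordinary double points actually needed in this paper.
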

\begin{lemma}[{\cite[Lemma 2.4]{CTPCyclic}}]
	\label{lem:CH0TrivialComponents}
	Let $X$ be a projective, reduced, geometrically connected scheme over a field $k$ and $X = \bigcup_{i=1}^N X_i$ its decomposition into irreducible components. Assume that
	\begin{enumerate}[label = \arabic*)]
		\item each $X_i$ is geometrically irreducible and every $\CH_0(X_i)$ is universally trivial,
		\item each intersection $X_i \cap X_j$ is either empty or contains a $0$-cycle of degree $1$.
	\end{enumerate}
	Then $\CH_0(X)$ is universally trivial.
\end{lemma}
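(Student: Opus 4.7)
The plan is to fix an arbitrary field extension $F/k$ and prove that the degree map $\deg \colon \CH_0(X_F) \to \Z$ is an isomorphism. Surjectivity is immediate: by universal triviality of, say, $\CH_0(X_{1,F})$, the component $X_{1,F}$ carries a zero-cycle of degree $1$, whose pushforward to $X_F$ has degree $1$.

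For injectivity, I would start with any zero-cycle $z$ of degree $0$ on $X_F$ and represent it as $z = \sum_{i=1}^N (\iota_i)_\ast z_i$, where $\iota_i \colon X_{i,F} \hookrightarrow X_F$ is the inclusion and $z_i \in \CH_0(X_{i,F})$. Such a decomposition exists because every closed point of $X_F$ lies on at least one component (though it is non-canonical for points lying on multiple components: one simply assigns each closed point to a chosen component containing it). Writing $d_i := \deg z_i$, we have $\sum_i d_i = 0$.

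The key step is to modify the $z_i$, without changing $[z] \in \CH_0(X_F)$, so that every $d_i$ vanishes. For this I would introduce the dual graph $\Gamma$ whose vertices are the components and whose edges are the pairs $(i,j)$ with $X_{i,F} \cap X_{j,F} \neq \emptyset$. By hypothesis each such intersection carries a zero-cycle $\zeta_{ij}$ of degree $1$; pushing $\zeta_{ij}$ forward to $X_F$ via either $X_{i,F}$ or $X_{j,F}$ yields the same class in $\CH_0(X_F)$. Hence replacing $z_i$ by $z_i - \zeta_{ij}$ and $z_j$ by $z_j + \zeta_{ij}$, viewed as cycles on the respective components, leaves $[z]$ unchanged but transfers one unit of degree from $i$ to $j$. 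Once every $d_i = 0$, universal triviality of each $\CH_0(X_{i,F})$ gives $z_i = 0$ in $\CH_0(X_{i,F})$, and pushing forward yields $[z] = 0$ in $\CH_0(X_F)$.

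The main obstacle is purely combinatorial: verifying that these edge-moves actually suffice to zero out all $d_i$ subject to $\sum_i d_i = 0$. This holds precisely when $\Gamma$ is connected. Connectedness of $\Gamma$ follows from the hypothesis that $X$, and therefore $X_F$, is (geometrically) connected, since any partition of the vertex set of $\Gamma$ into two nonempty subsets with no edges between them would produce a disconnection of $X_F$ into the corresponding unions of components. Spelling out a spanning-tree argument on $\Gamma$ then finishes the proof.
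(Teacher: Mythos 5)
Your argument is correct and complete: the dual-graph/spanning-tree bookkeeping, the observation that the degree-one cycles on the intersections let you transfer degree between adjacent components without changing the class in $\CH_0(X_F)$, and the use of geometric connectedness to get connectedness of the graph are exactly the right ingredients. Note that the paper does not prove this lemma but quotes it from \cite{CTPCyclic}; your proof is essentially the standard one given there, so there is nothing to add.
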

Another useful observation is the following lemma, proving that a variety that is a cone with a rational point as its vertex is universally $\CH_0$-trivial.
\begin{lemma}
    \label{lem:ConesTrivial}
    Assume that the projective variety $X \subset \projspace^n$, defined over a field $k$, is a cone with a $k$-rational point $P$ as vertex, then $\CH_0(X)$ is universally trivial.
\end{lemma}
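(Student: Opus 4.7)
The plan is to show that for any field extension $L/k$, the degree map $\CH_0(X_L) \to \Z$ is an isomorphism. Since $P$ is $k$-rational, $P_L$ is an $L$-point of $X_L$ with residue field $L$, so $[P_L]$ has degree one and surjectivity is immediate; the content is injectivity, which will follow once I show that every closed point of $X_L$ is rationally equivalent to an integer multiple of $[P_L]$.

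The key geometric input is that for any closed point $x \in X_L$ with $x \neq P_L$, the line $\ell_x$ through $P_L$ and $x$ is contained in $X_L$, because $X_L$ is a cone with vertex $P_L$. This line is naturally parametrized over the residue field $\kappa(x)$: writing $P_L = [p_0{:}\cdots{:}p_n]$ with $p_i \in L$ and choosing a representative $[x_0{:}\cdots{:}x_n]$, $x_i \in \kappa(x)$, of the closed point $x$, the morphism
\[
    \varphi_x \colon \projspace^1_{\kappa(x)} \longrightarrow X_L, \qquad [s{:}t] \longmapsto [sx_0 + tp_0 : \cdots : sx_n + tp_n]
\]
is a well-defined $L$-morphism into $X_L$.

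I would then apply pushforward of rational equivalence along $\varphi_x$. The two $\kappa(x)$-rational points $[1{:}0]$ and $[0{:}1]$ of $\projspace^1_{\kappa(x)}$ are rationally equivalent, so $\varphi_{x,*}[[1{:}0]] = \varphi_{x,*}[[0{:}1]]$ in $\CH_0(X_L)$. The point $[1{:}0]$ maps to $x$, which is a closed point of $X_L$ whose residue field is again $\kappa(x)$, so its pushforward equals $[x]$. The point $[0{:}1]$, however, maps to $P_L$, a closed point of $X_L$ with residue field $L$, so its pushforward picks up the degree of the residue field extension: $\varphi_{x,*}[[0{:}1]] = [\kappa(x):L]\cdot[P_L]$. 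Combining gives the crucial identity
\[
    [x] = [\kappa(x):L] \cdot [P_L] \quad \text{in } \CH_0(X_L).
\]

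Once this identity is established, injectivity follows formally: any degree-zero cycle $\sum n_i[x_i]$ satisfies $\sum n_i[\kappa(x_i):L] = 0$, and therefore equals $\bigl(\sum n_i[\kappa(x_i):L]\bigr)\cdot[P_L] = 0$ in $\CH_0(X_L)$. The main obstacle I anticipate is being careful about the field over which the line $\ell_x$ is defined, since $\ell_x$ is only naturally a $\projspace^1$ over $\kappa(x)$, not over $L$; this is what produces the factor $[\kappa(x):L]$ in the pushforward and is essential for the argument to work uniformly over all extensions $L/k$.
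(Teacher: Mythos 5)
Your proof is correct, and it establishes exactly the identity the paper aims for, namely that every closed point $x$ of $X_L$ satisfies $[x]=\deg([x])\cdot[P_L]$ in $\CH_0(X_L)$, from which universal triviality follows formally. The geometric input is the same in both arguments (join each closed point to the vertex by lines inside the cone), but the bookkeeping for non-rational closed points is handled differently. The paper base-changes to the algebraic closure, writes the fibre over $Q$ as $r$ geometric points $Q_1,\dots,Q_r$, connects each $Q_i$ to $P$ by a line, and then descends the union of lines and the rational function $f$ with $\divisor(f)=rP_{\overline{k}}-\sum Q_i$ by Galois invariance. You instead keep everything over $L$ by viewing the join of $x$ and $P_L$ as the image of a morphism $\varphi_x\colon\projspace^1_{\kappa(x)}\to X_L$ and applying proper pushforward to the relation $[[1{:}0]]=[[0{:}1]]$; the factor $[\kappa(x):L]$ then comes out of the pushforward formula for the residue-field extension at $[0{:}1]$. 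Your route buys two things: it avoids the descent step entirely, and it is insensitive to whether $\kappa(x)/L$ is separable, whereas the paper's count of ``$r$ closed points'' in the fibre over $Q$ and the Galois-invariance argument are, strictly speaking, only clean in the separable case. The paper's version is more pictorial and closer to how one would draw the argument over an algebraically closed field. The only points worth making explicit in your write-up are the (easy) facts that $\varphi_x$ is well defined because the coordinate vectors of $x$ and $P_L$ are not proportional when $x\neq P_L$, and that its image lies in $X_L$ because the defining equations of the cone are constant along lines through the vertex; both are immediate.
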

\begin{proof}
    For any field extension $K/k$ $X_K$ is a cone with a $K$-rational point as vertex. So it suffices to prove that for a cone $X$ defined over a field $k$, not necessarily algebraically closed, $CH_0(X) \simeq \Z$. To this end, let $Q \in X$ be any closed point in $X$. Assume that the residue field of $Q$ has degree $r$ over $k$. It suffices to prove that $Q$ is rationally equivalent to $rP$, where $P$ is the vertex of the cone.

    To see this, we consider the base change to the algebraic closure $\overline{k}$ of $k$. Here the inverse image of $Q$ is a union of $r$ closed points $Q_1, \dots, Q_r$, and since the base change remains a cone, each of these points can be connected to $P$ via a line $L_i$. So the points are rationally equivalent, meaning that 
    \[rP_{\Bar{k}} - \sum_{i=1}^r Q_i = \divisor(f)\]
    for some rational function $f$ on the union. Both the union of all the lines $\cup_{i=1}^r L_i$ and $f$ are invariant under the action of the Galois group, hence descend to $X$, and prove the rational equivalence of $rP$ and $Q$.
\end{proof}

Using these results, we can prove the following lemmas.
\begin{lemma}
    \label{lem:CH_0(U)}
    $\CH_0(U_0)$ is universally trivial.
\end{lemma}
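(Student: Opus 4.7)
Since $k_0$ is algebraically closed with $\charac k_0 \neq 2$, the quadric factors as $x_2^2 - 2x_3^2 = (x_2 - \sqrt{2}\,x_3)(x_2+\sqrt{2}\,x_3)$, and so $U_0 = U_0^+ \cup U_0^-$ with
\[
    U_0^\pm := \{x_2 = \pm\sqrt{2}\,x_3,\ C = 0\} \subset \projspace^5_{k_0},
\]
where $C := x_0^2 x_5 + x_1^2 x_4 + x_3(x_3^2+x_4^2+x_5^2-2x_3(x_4+x_5))$ is independent of $x_2$. Thus each $U_0^\pm$ is isomorphic to the cubic hypersurface $V := \{C = 0\} \subset \projspace^4_{k_0}$ in the remaining coordinates. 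The plan is to apply \Cref{lem:CH0TrivialComponents} to the decomposition $U_0 = U_0^+ \cup U_0^-$: I need each $U_0^\pm$ to be geometrically integral and universally $\CH_0$-trivial, and the intersection $U_0^+ \cap U_0^-$ to carry a $0$-cycle of degree $1$.

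The intersection condition is the easy part: $U_0^+ \cap U_0^- = \{x_2 = x_3 = 0,\ x_0^2 x_5 + x_1^2 x_4 = 0\}$ obviously contains the $k_0$-rational point $[0:1:0:0:0:1]$.

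For the components, I will show that $V$ is geometrically integral, smooth outside a finite set of points, and rational, so that \Cref{lem:RationalCH0Trivial} applies. A direct computation of the Jacobian ideal of $C$ shows that $\Sing V$ is zero-dimensional (consisting of finitely many explicit points such as $[0:0:0:1:\pm i]$ and $[0:\pm\sqrt{2}:1:0:1]$ and $[\pm\sqrt{2}:0:1:1:0]$), and a Hessian check at one such point confirms that each singularity is an ordinary double point. Irreducibility of $V$ is then automatic, since a reducible cubic hypersurface would be singular along a codimension-$1$ subvariety, contradicting $\dim \Sing V = 0$. For rationality, I project $V$ from one of its nodes $P$; since a general line through $P$ meets $V$ with multiplicity $2$ at $P$ and in one further residual point, this projection gives a birational map $V \dashrightarrow \projspace^3$.

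The only substantive step is the Jacobian and Hessian computation to locate the singular points and confirm that they are ordinary double points; everything else is routine given the two lemmas cited above.
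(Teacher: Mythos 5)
Your proof is correct and follows essentially the same route as the paper: factor $x_2^2-2x_3^2$ into two hyperplanes, identify each component of $U_0$ with the cubic threefold $\{C=0\}$, verify via the Jacobian that it is integral with finite singular locus and rational by projection from a node, and conclude with \Cref{lem:RationalCH0Trivial} and \Cref{lem:CH0TrivialComponents}. The only cosmetic caveat is that your six singular points fall into two distinct symmetry orbits, so a Hessian check at ``one such point'' does not literally cover all of them; this is harmless, since the argument only needs the singular locus to be finite and a single singular point to be a node from which to project.
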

\begin{proof}
    Recall that $U_0$ is defined by the equations:
    \begin{equation*}
  x_0^2x_5 + x_1^2x_4 + x_3(x_5^2 + x_4^2 + x_3^2 - 2x_3(x_5 + x_4))=x_2^2-2x_3^2=0
\end{equation*}
Since the ground field is algebraically closed, the quadric $x_2^2 - 2x_3^2 = 0$ is the union of the two hyperplanes. Hence $U$ is the union of two cubic threefolds, which are both isomorphic to the cubic threefolds defined in $\projspace^5$ by
$x_2 = x_0^2x_5 + x_1^2x_4 + x_3(x_5^2 + x_4^2 + x_3^2 - 2x_3(x_5 + x_4))=0$. From the partial derivatives with respect to $x_0$ and $x_1$ we recognize that any singular point must satisfy one of the four possibilities $x_4=x_5=0$, $x_0=x_4=0$, $x_1=x_5=0$ or $x_0=x_1=0$. It is straightforward to check that there are no singular points satisfying the first condition, and two singular points for each of the three remaining conditions.

All the singular points are ordinary double points, so this cubic threefold is rational. Hence it follows from \Cref{lem:RationalCH0Trivial} that each component of the union has universally trivial $\CH_0$ group. We can therefore conclude that $\CH_0(U_0)$ is universally trivial by \Cref{lem:CH0TrivialComponents}.
\end{proof}
\begin{lemma}
    \label{lem:CH_0(V)}
    $\CH_0(V_0)$ and $\CH_0(S_0)$ are universally trivial.
\end{lemma}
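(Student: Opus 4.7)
The plan is to apply \Cref{lem:ConesTrivial} together with \Cref{lem:CH0TrivialComponents} for each of $V_0$ and $S_0$: decompose into two geometrically irreducible components, verify that each component is a cone with a $k_0$-rational point as vertex, and check that the pairwise intersection contains a $k_0$-rational point.

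For $V_0$, I use that $x_4 x_5 = 0$ forces $x_4 = 0$ or $x_5 = 0$. Restricting $c_0$ to $\{x_3 = x_4 = 0\}$ (resp.\ $\{x_3 = x_5 = 0\}$) gives $x_0^2 x_5 + x_2^2 x_6$ (resp.\ $x_1^2 x_4 + x_2^2 x_6$), so one obtains
\begin{align*}
    V_1 &= \{x_3 = x_4 = x_0^2 x_5 + x_2^2 x_6 = 0\} \subset \projspace^6_{k_0}, \\
    V_2 &= \{x_3 = x_5 = x_1^2 x_4 + x_2^2 x_6 = 0\} \subset \projspace^6_{k_0},
\end{align*}
with $V_0 = V_1 \cup V_2$. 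A short case analysis on possible linear factors shows each of these cubics is irreducible (the presence of the distinct monomials $x_0^2 x_5$ and $x_2^2 x_6$ rules out every candidate). Since $x_1$ does not appear in the defining cubic of $V_1$, this component is a cone over a cubic surface in $\projspace^3$ with vertex the rational point $[0{:}1{:}0{:}0{:}0{:}0{:}0]$, and symmetrically $V_2$ is a cone with vertex $[1{:}0{:}0{:}0{:}0{:}0{:}0]$, so \Cref{lem:ConesTrivial} gives universal triviality of $\CH_0(V_i)$. The intersection $V_1 \cap V_2 = \{x_3 = x_4 = x_5 = x_2^2 x_6 = 0\}$ contains the rational point $[1{:}0{:}0{:}0{:}0{:}0{:}0]$, so \Cref{lem:CH0TrivialComponents} yields the claim for $V_0$.

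For $S_0$, note that $x_6^3 = 0$ cuts out $x_6 = 0$ on the reduced scheme, and then $x_3 x_6 - x_4 x_5 = -x_4 x_5 = 0$ supplies the same factorization as before. Substituting $x_6 = 0$ into $c_0$ and setting $x_4 = 0$ (resp.\ $x_5 = 0$), the identities $x_3^2 - 2 x_3 x_5 + x_5^2 = (x_3 - x_5)^2$ and $x_3^2 - 2 x_3 x_4 + x_4^2 = (x_3 - x_4)^2$ produce
\begin{align*}
    S_1 &= \{x_6 = x_4 = x_0^2 x_5 + x_3(x_3-x_5)^2 = 0\}, \\
    S_2 &= \{x_6 = x_5 = x_1^2 x_4 + x_3(x_3-x_4)^2 = 0\},
\end{align*}
with $S_0 = S_1 \cup S_2$ as reduced schemes. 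The cubic defining $S_1$ cuts out an irreducible nodal plane curve in $\projspace^2_{x_0, x_3, x_5}$ (irreducibility follows from the presence of the monomial $x_3^3$ together with a quick check ruling out each possible linear factor); since the variables $x_1, x_2$ do not occur, $S_1$ is a cone with rational vertex $[0{:}1{:}0{:}0{:}0{:}0{:}0]$, and symmetrically $S_2$ is a cone with rational vertex. The intersection $S_1 \cap S_2$ is the linear subspace $\{x_3 = x_4 = x_5 = x_6 = 0\} = \projspace^2_{x_0, x_1, x_2}$, which obviously contains rational points. \Cref{lem:ConesTrivial} and \Cref{lem:CH0TrivialComponents} then close the argument.

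The only mildly technical step is the irreducibility check for the three defining cubics, each of which reduces to a short calculation excluding linear factors. Once these are verified, recognizing each component as a cone is immediate from the absence of the relevant variable in the defining equation, and the rest is a direct appeal to the two cited lemmas.
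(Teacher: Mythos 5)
Your proof is correct and follows essentially the same route as the paper: decompose $V_0$ (and $S_0^{\mathrm{red}}$) via $x_4x_5=0$ into two cubic threefolds, observe each is a cone with a rational vertex, and combine \Cref{lem:ConesTrivial} with \Cref{lem:CH0TrivialComponents}. You merely make explicit the component equations, vertices, irreducibility checks, and rational points of the intersections, all of which the paper leaves implicit.
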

\begin{proof}
    Recall that $V_0$ is defined by the equations:
    \[x_3 = x_4x_5 = x_0^2x_5 + x_1^2x_4 + x_6x_2^2 = 0.\] 
    
    We recognize the scheme defined by these equations as the union of two cubic threefolds. Each cubic threefold is a cone over a cubic surface, and as such universally $\CH_0$ trivial by \Cref{lem:ConesTrivial}. The conclusion therefore follows from \Cref{lem:CH0TrivialComponents}.

    Since Chow groups only depend on the underlying reduced scheme, for $S_0$ we consider the following equations, which define $S_0^{red}$:
    \[x_6=x_4x_5 = x_0^2x_5 + x_1^2x_4 + x_3(x_3^2 + x_4^2 + x_5^2 - 2x_3(x_4+x_5)) = 0.\]
    By arguing as in the case of $V_0$ we see that also $S_0^{red}$ is universally $\CH_0$ trivial.
\end{proof}

\subsection{Proving that $\Phi$ is not surjective}
Recall from \Cref{prop:KeyReduction} that the image of $\Phi \colon \CH_1(X_k \times k(P_W)) \to \CH_0(P_{W} \times k(P_W))$ modulo $2$ is contained in the image of the base change map
\begin{equation}
\label{eq:PWBaseChangeMap}
\CH_0(P_{W_0})/2 \longrightarrow \CH_0(P_{W_0, k(P_{W_0})})/2.
\end{equation}
To simplify notation in the following part, we will drop the subscript $0$, writing $\mathcal{W}$ and $k$ for $W_0$ and $k_0$ respectively.

We can therefore prove that $\Phi$ is not surjective by proving the following:
\begin{proposition}
\label{lem:MerkurjevObstruction}
    The class 
    \begin{equation}
        \label{eq:diagonalclass}
        \delta_{P_\mathcal{W}} - z_{k(P_\mathcal{W} )}
    \end{equation}
    is not contained in the image of \eqref{eq:PWBaseChangeMap}. 
\end{proposition}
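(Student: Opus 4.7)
The strategy is to apply the Merkurjev pairing with an unramified cohomology class on $P_\mathcal{W}$ to separate the diagonal from all zero-cycles coming from $\CH_0(P_\mathcal{W})$. Since $z_{k(P_\mathcal{W})}$ is by construction in the image of \eqref{eq:PWBaseChangeMap}, it is enough to prove that the diagonal point $\delta_{P_\mathcal{W}}$ alone does not lie in the image of \eqref{eq:PWBaseChangeMap} modulo~$2$.

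By the choices made in \Cref{def:polynomials}, the variety $\mathcal{W}$ is birational to the Hassett--Pirutka--Tschinkel quadric surface bundle, see \cite[Corollary 3.8]{Ska23}. By \cite{HPT18}, combined with the analysis in \cite{Ska23}, there exists a nontrivial unramified cohomology class $\alpha \in H^2_{\mathrm{nr}}(k(\mathcal{W})/k, \Z/2)$. The projection $\pi \colon P_\mathcal{W} \to \mathcal{W}$ is a Zariski-locally trivial $\projspace^1$-bundle because it admits two disjoint sections, namely $\Tilde{Y} \cap P_\mathcal{W}$ and $Z \cap P_\mathcal{W}$ (see \Cref{lem:TildeCurlyX}). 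Hence $P_\mathcal{W}$ is stably birational to $\mathcal{W}$, and pulling back gives a nontrivial unramified class $\widetilde{\alpha} := \pi^\ast \alpha \in H^2_{\mathrm{nr}}(k(P_\mathcal{W})/k, \Z/2)$.

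I would then evaluate the Merkurjev pairing
$$
\CH_0(P_{\mathcal{W},k(P_\mathcal{W})})/2 \otimes H^2_{\mathrm{nr}}(k(P_\mathcal{W})/k, \Z/2) \longrightarrow H^2(k(P_\mathcal{W}), \Z/2)
$$
against $\widetilde{\alpha}$. Pairing with $\delta_{P_\mathcal{W}}$, which is the generic point viewed as a $k(P_\mathcal{W})$-rational point, yields $\widetilde{\alpha}$ itself in $H^2(k(P_\mathcal{W}), \Z/2)$, which is nonzero. On the other hand, every closed point $z \in P_\mathcal{W}$ is $k$-rational since $k$ is algebraically closed, so $z_{k(P_\mathcal{W})}$ is $k(P_\mathcal{W})$-rational and its pairing with $\widetilde{\alpha}$ is the image of $\widetilde{\alpha}|_z \in H^2(k, \Z/2) = 0$. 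Consequently every class in the image of \eqref{eq:PWBaseChangeMap} pairs trivially with $\widetilde{\alpha}$ modulo $2$, whereas $\delta_{P_\mathcal{W}}$ does not, yielding the desired contradiction.

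The only nontrivial input is the existence of $\alpha$, which is provided by the Hassett--Pirutka--Tschinkel construction transported to $\mathcal{W}$ in \cite{Ska23}; the remaining steps are a direct computation of the Merkurjev pairing together with the standard fact that a Zariski-locally trivial $\projspace^1$-bundle preserves unramified cohomology.
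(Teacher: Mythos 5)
Your overall strategy is the same as the paper's: pair the class $\delta_{P_\mathcal{W}} - z_{k(P_\mathcal{W})}$ against the nontrivial unramified cohomology class $\alpha$ coming from the Hassett--Pirutka--Tschinkel bundle via \cite[Corollary 3.8]{Ska23}, and observe that the diagonal pairs nontrivially while classes pulled back from $\CH_0(P_\mathcal{W})$ pair trivially. However, there is a genuine gap: the Merkurjev pairing is only defined on \emph{smooth} proper varieties, and $\mathcal{W} = W_0 = \{c_0 = q_0 = 0\}$ is singular (indeed its singularity is essentially the point of the HPT construction), hence so is the $\projspace^1$-bundle $P_\mathcal{W}$. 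Your computation "pairing $\delta_{P_\mathcal{W}}$ with $\widetilde\alpha$ yields $\widetilde\alpha \neq 0$" and "restriction of $\widetilde\alpha$ to a closed point vanishes" both presuppose that the pairing exists on $P_{\mathcal{W},k(P_\mathcal{W})}$, which it does not.

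Handling this is the bulk of the paper's proof and cannot be waved away. The paper first reduces from $P_\mathcal{W}$ to $\mathcal{W}$ (as you implicitly do), then blows up the vertex plane $D$ to obtain $\mathcal{W}' = \blowup_D\mathcal{W}$ with a quadric bundle structure over $\projspace^1\times\projspace^1$, and then passes to an odd-degree alteration $\tau\colon \widetilde{\mathcal{W}}\to\mathcal{W}'$ (using Gabber's theorem, hence the hypothesis $\charac k \neq 2$). Each of these modifications introduces correction terms in the cycle identity: a term $z''$ supported on the exceptional locus $E$ of the blow-up and a term $z'''$ supported over the singular locus of $\mathcal{W}'$. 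One must then show separately that $\alpha$ pairs trivially with each: for $z''$ this uses the explicit geometric fact that $E$ is the conic bundle realizing the class $\alpha$, so $\restr{\alpha}_E = 0$ (\cite[Lemma 3.9]{Ska23}); for $z'''$ it uses Schreieder's vanishing theorem (\Cref{lem:GenericfibreSmooth}) for subvarieties not dominating the base $\projspace^1\times\projspace^1$; and the odd degree of $\tau$ is needed so that $\tau_\ast\tau^\ast\alpha = (\deg\tau)\alpha$ survives reduction modulo $2$. None of these steps appears in your argument, so as written the proof does not go through, even though the guiding idea is correct.
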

The proof is based on the Merkurjev pairing, introduced in \cite[Section 2.4]{Mer08}, and the proof is essentially the same as the proof of \cite[Proposition 3.12]{Ska23}, which in turn is based on methods of Schreieder in \cite{Sch19b}. See also \cite[Lemma 5.13]{PS23}.

Recall that on a smooth variety $X$ over a field $K$ of characteristic different from $2$, not necessarily algebraically closed, the Merkurjev pairing gives a bilinear pairing
\[\CH_0(X) \times H_{nr}^i(K(X)/K, \Z/2) \to H^i(K,\Z/2).\]
For an overview of the pairing and its application to rationality problems, see \cite[Section 5]{Sch21}.

\begin{proof}[Proof of \Cref{lem:MerkurjevObstruction}]
    Since $P_\mathcal{W} $ is a projective bundle over $\mathcal{W} $, we have an isomorphism $\CH_0(P_\mathcal{W} ) \simeq \CH_0(\mathcal{W} )$. Since this isomorphism also holds after extending the field to $k(\mathcal{W} )$, and Chow groups do not change under purely transcendental field extensions such as $k(P_\mathcal{W} )/k(\mathcal{W} )$, we also have an isomorphism $\CH_0(P_{\mathcal{W} , k(P_\mathcal{W} )})/2 \simeq \CH_0(\mathcal{W} _{k(\mathcal{W} )})/2$. This isomorphism maps the diagonal class to the diagonal class. From this we conclude that \eqref{eq:diagonalclass} is contained in the image of the map \eqref{eq:PWBaseChangeMap} if and only if $\delta_\mathcal{W}  - z_{k(\mathcal{W} )}$ is contained in the image of 
    \begin{equation}
        \label{eq:WBaseChangeMap}
        \CH_0(\mathcal{W} )/2 \longrightarrow \CH_0(\mathcal{W} _{k(\mathcal{W} )})/2.
    \end{equation}
    By construction $\mathcal{W} $ is the same variety as the one considered in \cite[Lemma 3.4]{Ska23}, which in turn is birational to the quadric bundle constructed in \cite{HPT18} and therefore has a nonzero unramified cohomology class $\alpha \in H_{nr}^i(k(\mathcal{W})/k, \Z/2)$, see \cite[Corollary 3.8]{Ska23}.
    
    The main result in \cite{Ska23} is proven by showing that the cycle $\delta_\mathcal{W}  - z_{k(\mathcal{W} )}$ is not in the image of $\CH_0(\mathcal{W} ) \to \CH_0(\mathcal{W} _{k(\mathcal{W})})$. Our goal here is to prove that the cycle is still not in the image after the reduction mod 2. Luckily, this introduces very little additional complications. In fact, it is straightforward to use linearity of the Merkurjev paring to reduce the argument to the same as in \cite{Ska23}. We sketch the entire argument here for ease of reference.

    The central idea is that if $\delta_\mathcal{W}  - z_{k(\mathcal{W} )}$ is in the image of  \eqref{eq:WBaseChangeMap}, then we have an equality 
    \begin{equation}
        \label{eq:DiagonalEquals1}
        \delta_\mathcal{W}  = z_{k(\mathcal{W} )} + 2z' \in \CH_0(\mathcal{W} _{k(\mathcal{W} )}),
    \end{equation}
    where $z$ is a zero-cycle on $\mathcal{W} $ and $z'$ a zero-cycle on $\mathcal{W} _{k(\mathcal{W} )}$.
    
    Intuitively, we would like to say that the Merkurjev pairing of one side with $\alpha$ vanishes, whereas pairing $\alpha$ with the other side does not. Thus we obtain a contradiction.

    However $\mathcal{W} $ is singular, and the Merkurjev pairing is only defined on smooth varieties. To address this issue we will first have to blow up a subvariety of $\mathcal{W} $ as a first step towards resolving the singularities. Having a very explicit description of this first step turns out to be crucial for the argument. After this blow-up, we can use an alteration and a general result by Schreieder (\Cref{lem:GenericfibreSmooth}) to deal with the remaining singularities. Throughout we must keep track on how each of these operations change the equality \eqref{eq:DiagonalEquals1}.

    Following an idea from \cite[Theorem 7.1]{NO22}, we observe that $\mathcal{W} $ is the intersection of a cubic hypersurface containing the plane $D$ defined by $x_3=x_4=x_5=x_6=0$ and a cone $Q$ over a quadric surface with vertex plane $D$. By blowing up this plane we get a variety $\mathcal{W} ' = \blowup_D \mathcal{W} $, with a map to $\projspace^1 \times \projspace^1$ induced by the projection of the $\projspace^3$-bundle $\blowup_D Q$ to the quadric surface. As is remarked in \cite{Ska23}, $\mathcal{W} '$ is not smooth, but the generic fibre of the quadric bundle is smooth. We can now choose an alteration $\tau \colon \widetilde{\mathcal{W} } \to \mathcal{W} '$ of odd degree.

    Before we use the Merkurjev pairing on $\widetilde{\mathcal{W} }$, we must understand what equality in $\CH_0(\widetilde{\mathcal{W} } \times k(\mathcal{W}))$ follows from \eqref{eq:DiagonalEquals1}. Since the blowup $\mathcal{W} ' \to \mathcal{W} $ is an isomorphism away from the singular locus $E$, we have an equality
    \begin{equation}
        \label{eq:DiagonalEquals2}
        \delta_\mathcal{W}  = z_{k(\mathcal{W} )} + 2z' + z'' \in \CH_0(\mathcal{W} '_{k(\mathcal{W} )}),    
    \end{equation}
    where $z''$ is supported on $E$. Next, let $O'$ be the complement of the singular locus of $\mathcal{W} '$, and $\widetilde{O} \coloneqq \tau^{-1}(O') \subset \widetilde{\mathcal{W} }$. Then $\tau$ induces a well-defined pullback map from $\CH_0(O') \to \CH_0 (\widetilde{O})$. 
    
    So the pullbacks of the two sides of \eqref{eq:DiagonalEquals2} by $\tau$ are equal on $\widetilde{O}$, hence we obtain an equality 
    \begin{equation}
        \label{eq:DiagonalEquals3}
        \tau^*(\delta_{\mathcal{W} '})= \widetilde{\delta}_{\tau} = \tau^* z_{k(\mathcal{W} )} + 2 \tau^*z' + \tau^*z'' + z''' \in \CH_0(\mathcal{W} '_{k(\mathcal{W} )}),
    \end{equation}
    where $z'''$ is supported on $\widetilde{\mathcal{W} } \setminus \widetilde{O}$.

    We will now compute that the Merkurjev pairing of the nonzero class $\alpha$ with both sides of \eqref{eq:DiagonalEquals3} is different. Hence no equality of the form \eqref{eq:DiagonalEquals3} is possible, proving the proposition.

    Since $\tau$ is \'etale in a neighborhood of the diagonal point, we have $\tau^*(\delta_{\mathcal{W} '}) = \widetilde{\delta}_{\tau}$, where $\widetilde{\delta}_\tau$ is the $0$-cycle corresponding to the graph of the map $\tau$ in $\widetilde{\mathcal{W} } \times \mathcal{W} '$. This graph is isomorphic to $\widetilde{\mathcal{W} }$, hence $\tau$ induces a map from $\Spec k(\widetilde{\mathcal{W} })$ to $\Spec k(\mathcal{W} ) $. Furthermore, to compute the pairing, we can compute the pushforward of $\tau^* \alpha$ by this map. We get
    \[\innerpro{\widetilde{\delta}_{\tau}}{\tau^*\alpha} = \tau_* \tau^* \alpha = (\deg \tau) \alpha \neq 0.\]
    Since $\alpha$ is nonzero of even order, this class is also nonzero.

    We now compute the pairing of $\tau^* \alpha$ with the summands on the right-hand side. First we note that since $\alpha$ has order 2, so does $\tau^* \alpha$. By linearity, the pairing with $2 \tau^*z'$ must be zero.

    Next we look at the pairing
    \[\innerpro{\tau^*z_{k(\mathcal{W} )}}{\tau^*\alpha}= 0.\]
    By definition of the Merkurjev pairing it factors through the restriction of $\tau^*\alpha$ to a closed point on $\widetilde{\mathcal{W} }$, a smooth variety over an algebraically closed field, and the restriction of unramified cohomology classes of positive degree to such classes vanishes. Hence the pairing is zero.

    We next consider
    \[ \innerpro{\tau^*z''}{\tau^* \alpha} = 0. \]
    By functoriality of pullback of unramified cohomology, we can compute the pairing after restricting the unramified cohomology class to the smooth locus $E \cap O' \subset \mathcal{W} '$ and then pulling back to $\widetilde{\mathcal{W} }$. One can check that $E$ is the conic bundle corresponding to the class $\alpha$, hence the restriction of $\alpha$ to $E$ vanishes. (See  \cite[Lemma 3.9]{Ska23}.) We conclude that $\innerpro{\tau^*z''}{\tau^* \alpha} = 0$.

    It remains to prove that 
    \[\innerpro{z'''}{\tau^*\alpha} = 0.\]
    But $z'''$ is supported on $\widetilde{\mathcal{W} } \setminus \widetilde{O}$, which is the inverse image of the singular locus of $\mathcal{W} '$. Since the singular locus of $\mathcal{W} '$ does not dominate $\projspace^1 \times \projspace^1$, we can conclude by \Cref{lem:GenericfibreSmooth} below.

From these computations, we see that the pairing of $\tau^*\alpha$ with the left hand side of \eqref{eq:DiagonalEquals3} is nonzero, but the pairing of $\tau^*\alpha$ with the right-hand side of \eqref{eq:DiagonalEquals3} is zero. Since \eqref{eq:DiagonalEquals3} does not hold, neither does \eqref{eq:DiagonalEquals1}, hence $\Phi$ is not surjective.
\end{proof}

\begin{theorem}{{\cite[Theorem 9.2]{Sch19b}}}	
  \label{lem:GenericfibreSmooth}
  Let $f \colon Y \to S$ be a surjective morphism of proper varieties over an algebraically closed field $k$ with $char(k) \neq 2$ whose generic fibre is birational to a smooth quadric over $k(S)$. Let $n = \dim(S)$ and assume that there is a class $\alpha \in H^n(k(S),\Z/2)$ with $f^* \alpha \in H_{nr}^n(k(Y)/k,\Z/2)$.
  Then for any dominant generically finite morphism $\tau \colon Y' \to Y$ of varieties, and for any subvariety $E \subset Y'$ that meets the smooth locus of $Y'$ and which does not dominate $S$ via $f \circ \tau$, we have $\restr{(\tau^* f^* \alpha)}_{E} = 0 \in H^n(k(E),\Z/2)$.
\end{theorem}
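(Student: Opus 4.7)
\bigskip

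\noindent\textbf{Proof plan.} The plan is to combine the structure of the quadric bundle $f \colon Y \to S$ with the unramifiedness hypothesis on $f^*\alpha$ to constrain the restriction $\restr{(\tau^* f^* \alpha)}_E \in H^n(k(E), \Z/2)$. The two main cohomological inputs I would use are residue maps in Galois cohomology (in the sense of Rost's cycle modules) and Arason-type descriptions of the kernel of the pullback $H^n(F, \Z/2) \to H^n(F(Q), \Z/2)$ for a smooth quadric $Q$ over a field $F$.

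First, I would reduce to a tractable local geometric setup. Only the generic point of $E$ enters the computation of $\restr{(\tau^* f^* \alpha)}_E$, so I may work in a neighborhood of that generic point where $Y'$ is smooth (using the hypothesis that $E$ meets the smooth locus) and where $\tau$ is generically étale (as it is dominant and generically finite). Let $T \subset S$ denote the Zariski closure of $(f \circ \tau)(E)$, so that $\dim T < n$ by hypothesis. Then the restriction $\alpha|_{k(T)} \in H^n(k(T), \Z/2)$ is the central object to control, since the restriction of $\tau^* f^* \alpha$ to $E$ factors through $\alpha|_{k(T)}$ via the composition $k(T) \hookrightarrow k(Y_\eta) \hookrightarrow k(E)$, where $Y_\eta$ denotes the generic fibre of $Y \to S$ over the generic point of $T$.

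Next I would analyze the interplay between unramifiedness and the quadric structure. The fibre $Y_\eta$ is a quadric over $k(T)$, smooth if $T$ avoids the discriminant locus of $f$, degenerate otherwise. The class $f^*\alpha \in H^n(k(Y), \Z/2)$ restricts on $Y_\eta$ to the pullback of $\alpha|_{k(T)}$ along $k(T) \hookrightarrow k(Y_\eta)$. Combining the unramifiedness of $f^*\alpha$ at codimension one points of $Y$ lying over the generic point of $T$ with Arason's theorem on the kernel of $H^n(k(T), \Z/2) \to H^n(k(Y_\eta), \Z/2)$ for a smooth quadric $Y_\eta$, the class $\alpha|_{k(T)}$ should be forced into a subgroup generated by symbols involving the defining form of $Y_\eta$. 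Pulling such a symbol back further to $k(E)$, where $E$ lives inside the fibre of $f \circ \tau$ over $T$, makes the symbol -- and hence $\restr{(\tau^* f^* \alpha)}_E$ -- vanish.

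The main obstacle will be handling degenerate fibres of $f$ over the generic point of $T$, where the clean Arason--Pfister framework does not apply directly. I expect Schreieder's argument in \cite{Sch19b} to exploit the maximality $n = \dim S$ (so that $\alpha|_{k(T)}$ is a degree $n$ class over a function field of transcendence degree $n-1$, and is therefore strongly constrained) in order to bypass this difficulty, via a careful residue calculation at the codimension one points where the quadric structure degenerates. A secondary technical point is that the alteration $\tau$ need not be étale along $E$; here the assumption that $E$ meets the smooth locus of $Y'$, combined with functoriality of residues for cycle modules, ensures that $\restr{(\tau^* f^* \alpha)}_E$ is well-defined and compatible with the geometric picture on $Y$ used in the argument above.
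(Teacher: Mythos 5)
This statement is not proved in the paper at all: it is quoted verbatim from \cite[Theorem 9.2]{Sch19b} and used as a black box, so there is no in-paper argument to compare yours against. Judged on its own terms, your submission is a plan rather than a proof, and its central reduction has a genuine gap. You claim that $\restr{(\tau^*f^*\alpha)}_E$ factors through a restriction $\alpha|_{k(T)}$, where $T$ is the closure of $(f\circ\tau)(E)$. But $\alpha$ is in general ramified along divisors of $S$ containing $T$ (in the application in this paper, $\alpha$ is the ramified Hassett--Pirutka--Tschinkel class on $\projspace^1\times\projspace^1$), so $\alpha|_{k(T)}$ is simply not defined; controlling what happens to $f^*\alpha$ over the locus where $\alpha$ ramifies is the entire content of the theorem, not a preliminary step. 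A useful sanity check: if the factorization you assert did hold, the theorem would follow instantly from Serre's cohomological dimension bound, since $\trdeg_k k(T)=\dim T<n$ and $k$ is algebraically closed force $H^n(k(T),\Z/2)=0$ --- no quadric hypothesis and no Arason-type kernel computation would be needed. The fact that your plan then goes on to invoke Arason's theorem for the fibre over the generic point of $T$ (a fibre which, unlike the generic fibre over $k(S)$, need not be a smooth quadric, or even reduced or irreducible) indicates that the reduction is not where the difficulty lies and that the quadric input is being applied at the wrong place.

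Your closing paragraph concedes this: you write that you ``expect Schreieder's argument in \cite{Sch19b} to exploit'' the maximality of $n$ to handle the degenerate fibres and the residue calculations. Deferring precisely the hard cases to the reference you are asked to prove blind leaves the argument incomplete. The parts that do work --- well-definedness of the restriction to $E$ via the smooth locus of $Y'$ and functoriality of unramified classes under the generically finite $\tau$ --- are the routine parts. To close the gap you would need the actual mechanism of \cite{Sch19b}: an analysis of the residues of $\alpha$ along divisors of $S$, using the unramifiedness of $f^*\alpha$ on the total space together with norm/transfer and Witt-kernel arguments for the quadric over $k(S)$, to show that whatever survives over the generic point of $T$ dies upon restriction to $k(E)$.
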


    \subsection{Proof of the main result}
    \begin{theorem}\label{thm:GeomGenFibreNoDecompOfDiag}
        The geometric generic fibre of the family $\tilde{\mathcal{X}} \to \Spec k[[t]]$ from \Cref{lem:TildeCurlyX} does not admit a decomposition of the diagonal, and is therefore not retract rational.
    \end{theorem}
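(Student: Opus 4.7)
The plan is to invoke the Pavic--Schreieder obstruction as formulated in Theorem \ref{thm:PavicSchreieder} and Remark \ref{rem:SlightSimplificationOfPavicSchreieder}, applied to the strictly semi-stable family $\tilde{\mathcal{X}} \to \Spec k[[t]]$ produced in Lemma \ref{lem:TildeCurlyX}, whose special fibre is the chain $\tilde{Y} \cup P_W \cup Z$ (only neighbouring components meet nontrivially). By contrapositive, in order to show that the geometric generic fibre admits no decomposition of the diagonal, it suffices to exhibit a field extension $L/k$ and an element of $\ker(\deg) \subset \bigoplus \CH_0 (\cdot \times_k L)/2$ that does not lie in the image of the obstruction map $\Phi_{\tilde{\mathcal{X}}_{L}}$ modulo $2$. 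The retract irrationality conclusion then follows from the lemma in Section \ref{sec:Decomposition} that any retract rational variety admits a decomposition of the diagonal.

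I would take $L = k(P_W)$ and consider the candidate class whose $P_W$-component is the ``diagonal minus a point'' class
\[
    \delta_{P_W} - z_{k(P_W)} \;\in\; \CH_0(P_W \times_k k(P_W))/2,
\]
and whose $\tilde{Y}$- and $Z$-components are zero. This class lies in $\ker(\deg)$ because $\deg \delta_{P_W} = 1 = \deg z_{k(P_W)}$. Assume for contradiction that it lies in the image of $\Phi_{\tilde{\mathcal{X}},k(P_W)}$ modulo $2$.

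Now apply Proposition \ref{prop:KeyReduction}: composing with the three Fulton specializations $\specialization_\alpha$, $\specialization_\beta$, $\specialization_\gamma$ sends any element in the image of the $P_W$-component of $\Phi_{k(P_W)}$ modulo $2$ into the image of the base change map
\[
    \CH_0(P_{W_0})/2 \longrightarrow \CH_0(P_{W_0} \times_{k_0} k_0(P_{W_0}))/2,
\]
while sending $\delta_{P_W}$ to $\delta_{P_{W_0}}$ (and the constant cycle to a constant cycle, which is automatically in this image). Hence, under the assumption above, $\delta_{P_{W_0}} - z_{k_0(P_{W_0})}$ would lie in the image of the base change map as well.

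But this is exactly the statement contradicted by Proposition \ref{lem:MerkurjevObstruction}, whose proof uses the Merkurjev pairing against a nontrivial unramified cohomology class on $W_0$ inherited from the Hassett--Pirutka--Tschinkel quadric surface bundle to which $W_0$ is birational. We therefore obtain a contradiction, so $\Phi_{\tilde{\mathcal{X}}}$ is not universally surjective modulo $2$, and Theorem \ref{thm:PavicSchreieder} together with Remark \ref{rem:SlightSimplificationOfPavicSchreieder} implies that the geometric generic fibre of $\tilde{\mathcal{X}} \to \Spec k[[t]]$ admits no decomposition of the diagonal. Retract irrationality then follows. The only non-routine part of assembling this proof is making sure that the base change to $k(P_W)$ used in Proposition \ref{prop:KeyReduction} really corresponds to the setup of Theorem \ref{thm:PavicSchreieder} (i.e.\ that the candidate class may be taken with zero $\tilde{Y}$- and $Z$-components without loss of generality, using the explicit description \eqref{eq:ExplicitDescriptionOfObstructionMap} of $\Phi$ which preserves the per-component structure); everything else is a direct assembly of the results already proved.
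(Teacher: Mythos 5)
Your proposal is correct and follows essentially the same route as the paper: assume a decomposition of the diagonal, invoke Theorem~\ref{thm:PavicSchreieder} via Remark~\ref{rem:SlightSimplificationOfPavicSchreieder} to get universal surjectivity of $\Phi_{\tilde{\mathcal{X}}}$ modulo $2$, test against the class $\delta_{P_W}-z_{k(P_W)}$ over $L=k(P_W)$, push through the specializations of Proposition~\ref{prop:KeyReduction}, and contradict Proposition~\ref{lem:MerkurjevObstruction}. Your closing worry is harmless, since one only needs that the $P_W$-component of some preimage-hit element equals $\delta_{P_W}-z_{k(P_W)}$, which is exactly what the projection $\Phi_{\tilde{\mathcal{X}},P_W}$ analyzed in Proposition~\ref{prop:KeyReduction} controls.
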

    \begin{proof}
        Assume for contradiction that the geometric generic fibre of $\tilde{\mathcal{X}}$ admits a decomposition of the diagonal. Then the map $\Phi_{\Tilde{\mathcal{X}}}$ is universally surjective modulo 2 by \Cref{rem:SlightSimplificationOfPavicSchreieder}. In particular, the modulo $2$ reduction of the class $\delta_{P_W} - z_{k(P_W)}$ is contained in the image of the base change of $\Phi_{\Tilde{\mathcal{X}}}$ to $k(P_W)$, the function field of $P_W$. Since the map $\Phi_{\mathcal{X}}$ commutes with specialization of the involved varieties, and this specialization preserves the diagonal class, the class $\delta_{P_\mathcal{W}} - z_{k(P_\mathcal{W})}$ is preserved by the specialization (see also \Cref{prop:KeyReduction}) and must be contained in the image of the specialization of $\Phi_{\Tilde{\mathcal{X}}}$. But this contradicts \Cref{lem:MerkurjevObstruction}, so we conclude that the geometric generic fibre does not admit a decomposition of the diagonal, which in turn implies that it is not retract rational.
    \end{proof}

    \begin{cor}\label{cor:MainResult}
        Let $k$ be an algebraically closed field of characteristic different from $2$ with prime field $F$. Assume that the transcendence degree of $k$ over $F$ $\trdeg_F k \geq 3$. Then the very general (3,3) fivefold over $k$ does not admit a decomposition of the diagonal, and is therefore not retract rational.
    \end{cor}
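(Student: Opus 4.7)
The plan is to reduce the corollary to \Cref{thm:GeomGenFibreNoDecompOfDiag} by combining the specialization principle \Cref{lem:VeryGeneralSpecializesToOne}, the preservation of decompositions of the diagonal under Fulton's specialization map, and the standard fact that for a smooth projective variety the property of admitting a decomposition of the diagonal is invariant under extensions of algebraically closed base fields.

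Using the hypothesis $\trdeg_F k \geq 3$, I would first pick $\alpha, \beta, \gamma \in k$ algebraically independent over $\overline{F}$. Setting $k_0 := \overline{F}$ and $\kappa := \overline{k_0(\alpha, \beta, \gamma)} \subseteq k$, the construction of \Cref{sec:sssfamily} can be carried out verbatim for this $\kappa$ and produces the strictly semi-stable family $\tilde{\mathcal{X}} \to \Spec \kappa[[t]]$ of \Cref{lem:TildeCurlyX}. By \Cref{thm:GeomGenFibreNoDecompOfDiag} its geometric generic fibre is a smooth (3,3) fivefold over $\overline{\kappa((t))}$ that admits no decomposition of the diagonal.

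Next, I would transfer this example to a geometric point of the parameter space over $k$. Setting $K := k((t))$, the inclusion $\kappa \hookrightarrow k$ extends to an embedding $\overline{\kappa((t))} \hookrightarrow \overline{K}$, and by invariance of the decomposition property under algebraically closed extensions, the base change $\tilde{X}_{\overline{K}}$ also admits no decomposition. Let $B$ denote the irreducible smooth $k$-variety parametrizing smooth (3,3) complete intersections in $\projspace^7_k$; then $\tilde{X}_{\overline{K}}$ corresponds to a closed $\overline{K}$-rational point of $B_{\overline{K}} := B \times_k \overline{K}$. Applying \Cref{lem:VeryGeneralSpecializesToOne} to the universal family over $B_{\overline{K}}$, a very general fibre specializes to $\tilde{X}_{\overline{K}}$, and since admitting a decomposition of the diagonal is preserved under specialization (Fulton's specialization map, cf.\ \cite[Theorem 1.14]{CTP16}), this very general fibre also admits none.

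Finally, the very general fibre of $B_{\overline{K}}$ is the base change of the very general fibre of $B$ along a further extension of algebraically closed fields, so invoking the invariance descent once more shows that the very general (3,3) fivefold over $k$ admits no decomposition of the diagonal; retract irrationality then follows from the lemma of \Cref{sec:Decomposition}. The main obstacle is essentially organizational: one has to keep track of the chain of algebraically closed fields ($\overline{\kappa((t))} \subseteq \overline{K}$ on one side, $\overline{k(B)} \subseteq \overline{k(B_{\overline{K}})}$ on the other) and verify at each step that the non-existence of a decomposition is preserved in the required direction, but no new geometric ingredient beyond \Cref{thm:GeomGenFibreNoDecompOfDiag} is required.
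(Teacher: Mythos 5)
Your proposal follows the same route as the paper's proof: produce a single smooth $(3,3)$ fivefold admitting no decomposition of the diagonal from \Cref{thm:GeomGenFibreNoDecompOfDiag}, realize it as a closed point of the parameter space, and let the very general member specialize to it via \Cref{lem:VeryGeneralSpecializesToOne} and \cite[Theorem 1.14]{CTP16}. The paper simply asserts the existence of such a closed point $0 \in B$ over $k$, while you route through $B_{\overline{K}}$ with $\overline{K} = \overline{k((t))}$ and then descend; this is the same argument with the field-theoretic bookkeeping made explicit. One step of your descent is misstated, however: a very general closed point of $B_{\overline{K}}$ does not lie over a closed point of $B$ (it avoids only countably many of the uncountably many proper closed subsets pulled back from $B$), so its fibre is not literally ``the base change of the very general fibre of $B$.'' The correct comparison goes through the geometric generic fibres: by \cite[Lemma 2.1]{Vi13} the very general fibre of $B_{\overline{K}}$ is abstractly isomorphic to the geometric generic fibre of $B_{\overline{K}}$, which is the base change, along an extension of algebraically closed fields, of the geometric generic fibre of $B$, which in turn is abstractly isomorphic to the very general fibre of $B$ --- the first and last identifications requiring the respective base fields to be uncountable. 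For countable $k$ the set of very general points of $B$ is empty and the corollary is vacuous, so nothing is lost there. With this repair, and granting the standard invariance of the decomposition property for smooth projective varieties under extensions of algebraically closed fields (which you invoke in the nontrivial direction when passing from $\overline{\kappa((t))}$ to $\overline{K}$), your argument is complete and requires no geometric input beyond \Cref{thm:GeomGenFibreNoDecompOfDiag}.
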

    \begin{proof}
        Fix an inclusion $\overline{F}(\alpha,\beta,\gamma) \subset k$ and consider the parameter space $B$ of smooth (3,3) fivefolds over $k$. By \Cref{thm:GeomGenFibreNoDecompOfDiag} there exists a closed point $0 \in B$ such that the (3,3) fivefold $X_0$ admits no decomposition of the diagonal. Since the very general (3,3) fivefold specializes to $X_0$ (see \Cref{lem:VeryGeneralSpecializesToOne}), we find that the very general (3,3) fivefold over $k$ admits no decomposition of the diagonal by \cite[Theorem 1.14]{CTP16}.
    \end{proof}

    \begin{remark}
        The bound on the transcendence degree corresponds to the number of specializations used to control the Chow group of the special fiber of the semi-stable specialization in \Cref{lem:TildeCurlyX}. The construction presented here uses three steps to achieve this goal hence the bound in \Cref{cor:MainResult}. Roughly speaking, we use the first two steps to simplify the Chow groups of the components $Y$ and $Z$, cf. \eqref{eq:strategydiagram}. The final step is then to specialize the final component of the fiber to a variety with a nontrivial unramified cohomology class, while maintaining control of the resulting Chow groups.
        There might be other constructions achieving this goal in fewer steps, hence lowering the bound on the transcendence degree, but it seems likely that such a construction would require a different and more careful argument to control the Chow groups. 
    \end{remark}
    \printbibliography
\end{document}